\documentclass{birkjour}
 \newtheorem{thm}{Theorem}[section]
 \newtheorem{cor}[thm]{Corollary}
 \newtheorem{lem}[thm]{Lemma}
 \newtheorem{prop}[thm]{Proposition}
 \theoremstyle{definition}
 \newtheorem{defn}[thm]{Definition}
 \theoremstyle{remark}
 \newtheorem{rem}[thm]{Remark}
 \newtheorem*{ex}{Example}
 \numberwithin{equation}{section}
\begin{document}
%----------------------------------------------------------------------------------------------------------------
% editorial commands: to be inserted by the editorial office
%\firstpage{1} \volume{228} \Copyrightyear{2004} \DOI{003-0001}
%\seriesextra{Just an add-on}
%\seriesextraline{This is the Concrete Title of this Book\br H.E. R and S.T.C. W, Eds.}
% for journals:
%\firstpage{1}
%\issuenumber{1}
%\Volumeandyear{1 (2004)}
%\Copyrightyear{2004}
%\DOI{003-xxxx-y}
%\Signet
%\commby{inhouse}
%\submitted{March 14, 2003}
%\received{March 16, 2000}
%\revised{June 1, 2000}
%\accepted{July 22, 2000}
%--------------------------------------------------------------------------------------------------------------
%Insert here the title, affiliations and abstract:
%
\title[On Combinatorial Properties of the DKAP]
{On Combinatorial Properties of the degenerate Krawtchouk Appell
polynomials}
%----------Authors-----------------------------------------------------------------------------------------------
%
\author{Mohamed Abdelkader}
\address{%
Department of Statistics and Operations Research, College of
Science, King Saud University, P.O. Box 2455, Riyadh 11451, Saudi
Arabia} \email{aabdelkader1@ksu.edu.sa}
\author{Mohamed Rhaima}
\address{%
Department of Statistics and Operations Research, College of
Sciences, King Saud University, P.O. Box 2455, Riyadh 11451, Saudi
Arabia} \email{mrhaima.c@ksu.edu.sa}
\thanks{}
%----------classification, keywords, date
\subjclass{Primary 33C45; Secondary 11C08, 94A11, 42C05, 94A11.}
\keywords{Bell polynomials, degenerate Pascal measure, degenerate
Krawtchouk Appell polynomials, scaling operator.}
\date{December 1, 2017}
%----------additions
\dedicatory{}
%%% ------------------------------------------------------------------------------------------------------------------------------
\begin{abstract}
The foremost aim of this study is to introduce and study several
combinatorial properties and highlight specific aspects of a new
class of polynomials sequences known as degenerate Krawtchouk Appell
polynomials associated with the degenerate Pascal measure. As
applications, the connection that exists between brand-new
polynomials, Stirling numbers, scaling operator, translation
operator and the orthogonal Bell polynomials has been investigated.
\end{abstract}
%%% -----------------------------------------------------------------------------------------------------------------------------
\maketitle
%%% -----------------------------------------------------------------------------------------------------------------------------
%\tableofcontents
%================================================================================================================================
\section{Introduction}
%================================================================================================================================
Over the past three decades, the study of various degenerate
versions of special polynomials has grown significantly in
prominence, primarily due to its demonstrated applications in many
different scientific and engineering domains as in
\cite{[a80],[d91],[jk19],[k97],[kaa24],[kaaa21],[kk18],[kk23],[kkkl20],
[kss97],[kss19],[kswy98],[ltdy24],[rare25]}.
In fact, its applications in many interesting arithmetical and
combinatorial results, it offers a number of useful tools for
resolving integral and differential equations, quantum probability
and statistics, as well as a number of other issues involving
particular mathematical physics functions, as well as their
extensions and generalizations. The first developments in this
context was pioneered by Carlitz \cite{[c79]}, who introduced
degenerate Stirling, Bernoulli and Euler polynomials by replacing
the exponential function $e^z$ with $(1+\lambda z)^{1/\lambda}$ in
their generating functions. This approach recovers the classical
polynomials in the limit as $\lambda \to 0$. Building upon these
developments, Haroon and Khan \cite{[hk18]} investigated degenerate
Hermite-Bernoulli polynomials using $p$-adic fermionic integrals,
leading to new identities that establish connections with Daehee
polynomials in a unified and generalized framework. Duran and
Acikgoz \cite{[da20],[da20b]} introduced the generalized degenerate
Gould-Hopper-based Stirling polynomials of the second kind and the
degenerate Poisson distribution as well as is related to degenerate
Bell polynomials. In a related direction, Further contributions
include the work of Araci \cite{[a21]} and \cite{[a21b]}, who
studied a new class of generating functions for type $2$ Bernoulli
polynomials and a new family of Daehee polynomials called degenerate
$q$-Daehee polynomials, deriving several identities involving type
$2$ Euler polynomials and Stirling numbers of the second kind.
Frontczak and Tomovski \cite{[ft22]} provided probabilistic
interpretations for summation formulas involving generalized
Bernoulli and Euler-Genocchi polynomials of order $(r, m)$,
enriching the analytical framework of these polynomial families.
Recent work by Kim and Kim \cite{[kk23]} introduced probabilistic
variants of degenerate Stirling numbers of the second kind and
degenerate Bell polynomials, grounded in random variables that
satisfy certain moment conditions. These advancements continue to
deepen the interplay between combinatorial structures, probabilistic
methods, and degenerate special functions, revealing new pathways
for analytical exploration and interdisciplinary applications.

In recent years, there has been a great deal of interest in the
study of composite Poisson processes and their application to
various problems. The Pascal measure is an important example of
compound Poisson which is useful and popular discrete probability
distribution in probability theory and statistics. Moreover the
Krawtchouk polynomials, originally introduced by Mikhail Kravchuk
(Krawtchouk), represent a foundational class of orthogonal
polynomials with deep connections to combinatorics and probability
theory. A comprehensive overview of Kravchuk's contributions and the
historical development of these polynomials up to $2004$ is
available in \cite{[vk04]}. These polynomials possess intrinsic
mathematical richness and have found widespread applications across
diverse scientific fields, including biology, complex networks,
blockchain technology, quantum probability, coding theory,
electromagnetic wave propagation, and image processing (see
\cite{[mtv23]} about the notion of its Darboux transformations and
\cite{[yp03]} for applications in image analysis). Recall that the
probability mass function of Pascal random variable $X$ is given by
$$
Pr([X=n]):=\mathcal{NB}_{p,r}(\{n\})=p^{r}\binom{-r}{n}(-q)^n, \quad
q :=1-p, \; r>0,
$$
where for $\upsilon\in \mathbb{R}$ the generalized binomial
coefficient is defined by
$$
\binom{\upsilon}{n}=\frac{\upsilon(\upsilon-1)\ldots(\upsilon-n+1)}{n!}.
$$
For $s\in \mathbb{R}$ such that $|qe^{s}|<1$, one can see that the
Laplace transform of $\mathcal{N}\mathcal{B}_{p, r}$ denoted by
$\mathcal{L}_{\mathcal{N}\mathcal{B}_{p, r}}$ is given by
\begin{equation}\label{Laplace-Pascal}
\mathcal{L}_{\mathcal{N}\mathcal{B}_{p, r}}(s)
=\displaystyle{p^{r}\sum^{\infty}_{k=0}\binom{-r}{k}(-qe^{s})^{k}=\Big(\frac{p}{1-qe^{s}}\Big)^{r}}.
\end{equation}
It is clear that the generating function of the so-called classical
Krawtchouk polynomials $\mathcal{K}_{n, r}(x)$ is given by
\begin{equation}\label{Generating-Pascal}
\Psi(t,
x)=\displaystyle{(1+t)^{x}(1+qt)^{-x-r}=\sum^{\infty}_{n=0}\frac{t^{n}}{n!}\mathcal{K}_{n,
r}(x)}.
\end{equation}
Asai, Kubo and Kuo in \cite{[acc03]} prove that the orthogonality of
polynomials generated by $\Psi(t, x)$ is guaranteed under the
condition that $\mathbb{E}_{\mathcal{NB}_{p, r}}(\Psi(s, x)\Psi(t,
x))$ is a function of $st$. This approach cannot be applicable in
the degenerate version of Pascal case. So to solve this problem, the
so-called normalized-Laplace exponential is introduced to produce a
new class of polynomials which are called the degenerate Krawtchouk
Appell polynomials.

The structure of the paper is as follows. In section $2$, we define
the degenerate Pascal measure $\mathcal{NB}^{(\lambda)}_{p, r}$ and
we prove that this measure is a combination of the classical Pascal
measure and a specific probability measure $\nu_{\lambda, \beta}$ on
$\mathbb{R}_{+}$. Next we show that the $n$-th moments of
$\mathcal{NB}^{(\lambda)}_{p, r}$ is given explicitly in terms of
Stirling numbers of the second kind. In section 3, we introduce and
highlight key properties of a novel class of Appell polynomials
associated with the degenerate Pascal measure and as application we
obtain explicitly the chaos expansion of the scaling operator in
terms of these polynomials.
%==================================================================
\section{Degenerate Pascal Measure}
%==================================================================
Our first goal is to give the degenerate version of the Pascal
measure. So to define degenerate extensions of classical functions
and polynomials, we introduce the degenerate exponential function.
For any $\lambda\in \mathbb{R}$ and $z\in \mathbb{C}$, the
degenerate exponential function is defined as
\begin{equation}\label{Degenerate-exp}
e^{\beta}_{\lambda}(z)=(1+\lambda
z)^{\frac{\beta}{\lambda}}=\displaystyle{\sum^{\infty}_{k=0}(\beta)_{k,
\lambda}\frac{z^{k}}{k!}},
\end{equation}
where $(\beta)_{k,\lambda}$ is the degenerate falling factorial
defined by
\begin{equation}\label{Degenerate-falling}
(\beta)_{k,\lambda}:=
\begin{cases}
1, & \text{if } k=0, \\
\beta(\beta- \lambda)(\beta- 2\lambda)\cdots (\beta-(k-1)\lambda), &
\text{if}\, k \in \mathbb{N}^{\ast},
\end{cases}
\end{equation}
with the convention $(0)_{k,\lambda}:=1$.
\begin{defn}
For $\lambda< 0$ and $\beta>0$, the degenerate Pascal measure
$\mathcal{N}\mathcal{B}^{(\lambda)}_{p, r}$ is defined on the
$\sigma$-algebra of all subsets of $\mathbb{N}$ denoted by
$\mathcal{P}(\mathbb{N})$ by
\begin{equation}\label{Degenerate_Pascal-measure}
\mathcal{N}\mathcal{B}^{(\lambda)}_{p, r}(\Lambda)=
\displaystyle{\sum^{\infty}_{k=0}\frac{1}{k!}(e^{\beta}_{\lambda})^{(k)}
\Big[r\log\Big(\frac{p}{1-qz}\Big)\Big]_{z=0}\delta_{k}(\Lambda)},\quad
\Lambda\in \mathcal{P}(\mathbb{N}),
\end{equation}
where $\delta_{k}$ is the Dirac measure at $k\in \mathbb{N}$ and
$(e^{\beta}_{\lambda})^{(k)}(z):=\frac{d^{k}}{dz^{k}}e^{\beta}_{\lambda}(z)$
is the $k$-th derivative of $e^{\beta}_{\lambda}$. For $z\in
\mathbb{C}$ such that $|qe^{z}|<1$, the Laplace transform of the
measure $\mathcal{N}\mathcal{B}^{(\lambda)}_{p, r}$ is given by
\begin{equation}\label{laplace-tr-Degen-Pascal}
\mathcal{L}_{\mathcal{N}\mathcal{B}^{(\lambda)}_{p, r}}(z)
=\displaystyle{\sum^{\infty}_{k=0}
\frac{(qe^{z})^{k}}{k!}(e^{\beta}_{\lambda})^{(k)}
\big(r\log(p)\big)}
=\displaystyle{e^{\beta}_{\lambda}\Big\{r\log\big(\frac{p}{1-qe^{z}}\big)\Big\}}.
\end{equation}
\end{defn}
\begin{prop}
The degenerate Pascal measure
$\mathcal{N}\mathcal{B}^{(\lambda)}_{p, r}$ is related to the Pascal
measure $\mathcal{N}\mathcal{B}_{p, r}$ under the following
relation:
\begin{equation}\label{formula1}
\mathcal{N}\mathcal{B}^{(\lambda)}_{p,
r}=\displaystyle{\int^{\infty}_{0}\mathcal{N}\mathcal{B}_{p,
rs}\nu_{\lambda, \beta}(ds)},
\end{equation}
where
\begin{equation}\label{G-func}
\displaystyle{\nu_{\lambda,
\beta}(ds)=\frac{s^{-\frac{\beta}{\lambda}-1}e^{\frac{s}{\lambda}}}{\Gamma(-\frac{\beta}{\lambda})
(-\lambda)^{-\frac{\beta}{\lambda}}}ds:=G_{\lambda, \beta}(s)ds}.
\end{equation}
\end{prop}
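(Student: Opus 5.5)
The cleanest route is to compare Laplace transforms. Both sides of \eqref{formula1} are probability measures on $\mathbb{N}$: the right side is a mixture of the Pascal laws $\mathcal{NB}_{p,rs}$, and $\nu_{\lambda,\beta}$ is a probability measure. So it is enough to show that their Laplace transforms agree for real $z$ in a neighbourhood where $|qe^{z}|<1$. Alternatively, one can compare the masses at each point $n$.

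First I will check that $G_{\lambda,\beta}$ is a Gamma density. With $\lambda<0$ and $\beta>0$, put $a:=-\beta/\lambda>0$ and $\theta:=-\lambda>0$. Then
\[
G_{\lambda,\beta}(s)=\frac{s^{a-1}e^{-s/\theta}}{\Gamma(a)\theta^{a}},
\]
which is the $\mathrm{Gamma}(a,\theta)$ density on $\mathbb{R}_+$. Its Laplace transform is the standard one:
\[
\int_0^\infty e^{-us}G_{\lambda,\beta}(s)\,ds=(1+\theta u)^{-a}=(1-\lambda u)^{\beta/\lambda},\qquad 1-\lambda u>0 .
\]

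Next, by \eqref{Laplace-Pascal}, for $|qe^{z}|<1$,
\[
\mathcal{L}_{\mathcal{NB}_{p,rs}}(z)=\Big(\frac{p}{1-qe^{z}}\Big)^{rs}=\exp\big(s\,r\log\tfrac{p}{1-qe^{z}}\big).
\]
Set $w:=r\log\frac{p}{1-qe^{z}}$. All integrands are nonnegative, so Tonelli lets me swap the sum over $n$ with the $s$-integral, giving
\[
\mathcal{L}_{\int\mathcal{NB}_{p,rs}\nu_{\lambda,\beta}(ds)}(z)=\int_0^\infty e^{sw}G_{\lambda,\beta}(s)\,ds=(1+\lambda w)^{\beta/\lambda}=e^{\beta}_{\lambda}(w).
\]
This uses the Gamma transform with $u=-w$. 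The condition $1+\lambda w>0$ holds for $z$ near $-\infty$: there $w\to r\log p<0$, and $\lambda w>0$. By \eqref{laplace-tr-Degen-Pascal}, this is exactly $\mathcal{L}_{\mathcal{NB}^{(\lambda)}_{p,r}}(z)$.

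Finally, two measures on $\mathbb{N}$ whose Laplace transforms agree on an interval have the same power series in $x=e^{z}$ on a small interval of $x$. By uniqueness of power series coefficients, the point masses coincide. To see this concretely, note that $\mathcal{NB}^{(\lambda)}_{p,r}(\{n\})$ is the $n$-th Taylor coefficient of $e^{\beta}_{\lambda}(r\log\frac{p}{1-qx})$ in $x$, which matches the $x^n$ coefficient of the mixture.

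The step I expect to be the main obstacle is matching the definition \eqref{Degenerate_Pascal-measure} to these Taylor coefficients and confirming the domain conditions. As written, the definition is a composition evaluated at $z=0$, and it should be read through \eqref{laplace-tr-Degen-Pascal}. I will take \eqref{laplace-tr-Degen-Pascal} as the operative characterization. The domain issues are to check that $1+\lambda w>0$ on a nonempty interval of $z$, and that the Gamma integral converges there; since $w<0$ for small $e^{z}$, both hold.
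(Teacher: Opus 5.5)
Your proposal is correct and follows the same route as the paper: you recognize $G_{\lambda,\beta}$ as a Gamma density with Laplace transform $(1-\lambda u)^{\beta/\lambda}$, then interchange the sum over $n$ and the $s$-integral (Fubini/Tonelli) to show that the mixture has Laplace transform $e^{\beta}_{\lambda}\big(r\log\frac{p}{1-qe^{z}}\big)$, which is the right-hand side of \eqref{laplace-tr-Degen-Pascal}. Your extra care fills in what the paper's ``This gives the statement'' leaves implicit: you restrict to real $z$ where the Gamma integral converges, i.e.\ $1+\lambda r\log\frac{p}{1-qe^{z}}>0$ (e.g.\ all $z\le 0$), and you spell out the power-series uniqueness step; treating that right-hand side as the operative definition is exactly what the paper's proof does (the literal masses $q^{k}(e^{\beta}_{\lambda})^{(k)}(r\log p)/k!$ would not produce that transform).
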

\begin{proof}
Firstly we need to determine a function $G_{\lambda, \beta}$ such
that
\begin{equation}\label{Laplace-Transform-G}
\displaystyle{\int_{0}^{\infty} e^{-sx} G_{\lambda, \beta}(x)dx=
e^{\beta}_{\lambda} (-s)=(1-\lambda
s)^{\frac{\beta}{\lambda}}=\lambda^{\frac{\beta}{\lambda}}\Big(\frac{1}{\lambda}-s\Big)^{\frac{\beta}{\lambda}}}.
\end{equation}
Then by using the following Laplace transform
\begin{equation*}
(-1)^k\mathcal{L}(x^{k-1}e^{ax})(t)=\frac{\Gamma (k)}{(a-t)^{k}},
\quad k >0, \quad \mathcal{R}(a-t)> 0,
\end{equation*}
and by choosing $\displaystyle{a=\frac{1}{\lambda},
k=-\frac{\beta}{\lambda}}$, we get
\begin{equation*}
\lambda^{\frac{\beta}{\lambda}}\Big(\frac{1}{\lambda}-s
\Big)^{\frac{\beta}{\lambda}}=\mathcal{L}\Big(
\frac{x^{-\frac{\beta}{\lambda}-1}e^{\frac{x}{\lambda}}}{\Gamma(-\frac{\beta}{\lambda})
(-\lambda)^{-\frac{\beta}{\lambda}}}\Big)(s).
\end{equation*}
Hence the function $G_{\lambda, \beta}$ is given explicitly by
\begin{equation}\label{G-func}
G_{\lambda, \beta}(x) =
\frac{x^{-\frac{\beta}{\lambda}-1}e^{\frac{x}{\lambda}}}{\Gamma(-\frac{\beta}{\lambda})
(-\lambda)^{-\frac{\beta}{\lambda}}}.
\end{equation}
Note that $G_{\lambda, \beta}$ is the probability density function
of the Gamma distribution with shape parameter
$-\frac{\beta}{\lambda}$ and scale parameter $-\frac{1}{\lambda}$.
Denote by
$$\mu:=\displaystyle{\int^{\infty}_{0}\mathcal{N}\mathcal{B}_{p, ry}G_{\lambda, \beta}(y)dy}.$$
Then by using equations \eqref{laplace-tr-Degen-Pascal},
\eqref{Laplace-Transform-G} and Fubini's theorem, we get
$$\begin{array}{lll}
\displaystyle{\int^{\infty}_{0}e^{zx}\mu(dx)}&=&
\displaystyle{\int^{\infty}_{0}e^{zx}\int^{\infty}_{0}\mathcal{N}\mathcal{B}_{p, ry}(dx)G_{\lambda, \beta}(y)dy}\\
&=&\displaystyle{\int^{\infty}_{0}\Big(\int^{\infty}_{0}e^{zx}\mathcal{N}\mathcal{B}_{p, ry}(dx)\Big)G_{\lambda, \beta}(y)dy}\\
&=&\displaystyle{\int^{\infty}_{0}\exp\Big[ry\log\big(\frac{p}{1-qe^{z}}\big)\Big]G_{\lambda, \beta}(y)dy}\\
&=&\displaystyle{e^{\beta}_{\lambda}\Big\{r\log\Big(\frac{p}{1-qe^{z}}\Big)\Big\}}\\
&=&\mathcal{L}_{\mathcal{N}\mathcal{B}^{(\lambda)}_{p, r}}(z).
\end{array}$$
This gives the statement.
\end{proof}
\begin{prop}
The $m$-th moment of
$\mathcal{N}\mathcal{B}^{(\lambda)}_{p, r}$ is given explicitly by\\

$M_{\lambda}(m):=\displaystyle{\int_{\mathbb{R}}x^{m}\mathcal{N}\mathcal{B}^{(\lambda)}_{p,
r}(dx)}$
\begin{equation}\label{m-th-Moments}
=\displaystyle{(1+\lambda r\log p)^{\frac{\beta}{\lambda}}
\sum^{n}_{k=1}S(m, k)\Big(\frac{q}{1+\lambda r\log
p}\Big)^{k}(\beta)_{k, \lambda}e^{\beta-\lambda
k}_{\lambda}\Big(\frac{q}{1+\lambda r\log p}\Big)},
\end{equation}
where $S(m, k)$ is the Stirling numbers of the second kind defined
via its generating function by
\begin{equation}\label{Stir-2}
\displaystyle{(e^{z}-1)^{k}=k!\sum^{\infty}_{m=k}S(m,
k)\frac{z^{m}}{m!}}.
\end{equation}
\end{prop}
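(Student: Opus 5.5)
The plan is to compute $M_{\lambda}(m)$ directly as a series over the atoms of $\mathcal{N}\mathcal{B}^{(\lambda)}_{p, r}$, rather than by differentiating the closed form of the Laplace transform $m$ times. The key input is the first (series) expression in \eqref{laplace-tr-Degen-Pascal}, which I will take as given from the definition; I have not re-derived that equality. Evaluating it at $z=0$ and reading off coefficients of $e^{kz}$ shows that the mass at $k\in\mathbb{N}$ is
$$\frac{q^{k}}{k!}(e^{\beta}_{\lambda})^{(k)}(r\log p).$$
From \eqref{Degenerate-exp} we have $(e^{\beta}_{\lambda})^{(k)}(x)=(\beta)_{k,\lambda}(1+\lambda x)^{\frac{\beta}{\lambda}-k}$. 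Hence, with $A:=1+\lambda r\log p$,
$$M_{\lambda}(m)=\sum_{k=0}^{\infty}k^{m}\frac{q^{k}}{k!}(\beta)_{k,\lambda}A^{\frac{\beta}{\lambda}-k}.$$

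Next I expand $k^{m}$ in falling factorials, $k^{m}=\sum_{j=1}^{m}S(m,j)\,k(k-1)\cdots(k-j+1)$ for $m\ge 1$. This follows from \eqref{Stir-2} by comparing coefficients of $z^{m}$ in $e^{kz}=(1+(e^{z}-1))^{k}$ after the binomial expansion. Interchanging the finite $j$-sum with the $k$-sum, the falling factorial cancels against $k!$ to give $1/(k-j)!$, and the terms with $k<j$ vanish. I then substitute $l=k-j$ and use the factorization
$$(\beta)_{j+l,\lambda}=(\beta)_{j,\lambda}\,(\beta-\lambda j)_{l,\lambda},$$
which is immediate from \eqref{Degenerate-falling}. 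The $j$-th term becomes
$$A^{\frac{\beta}{\lambda}}S(m,j)\Big(\frac{q}{A}\Big)^{j}(\beta)_{j,\lambda}\sum_{l\ge0}(\beta-\lambda j)_{l,\lambda}\frac{(q/A)^{l}}{l!}.$$
By \eqref{Degenerate-exp} with $\beta$ replaced by $\beta-\lambda j$, the last sum is exactly $e^{\beta-\lambda j}_{\lambda}(q/A)$. This yields \eqref{m-th-Moments}, with the upper summation limit equal to $m$; the $n$ in the statement should read $m$.

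The main obstacle is analytic rather than algebraic. First, one must check that $A>0$, so that the real powers $A^{\frac{\beta}{\lambda}-k}$ make sense; since $\lambda<0$ and $\log p<0$, this holds automatically. Second, the expansion of $e^{\beta-\lambda j}_{\lambda}(q/A)$ requires $|\lambda q/A|<1$, and the series for $M_{\lambda}(m)$ must converge absolutely so that the rearrangements are legitimate. Under $|\lambda q|<A$ the terms $k^{m}q^{k}|(\beta)_{k,\lambda}|A^{-k}/k!$ behave like $k^{m+\beta/|\lambda|}|\lambda q/A|^{k}$, so the ratio test gives absolute convergence and Fubini/Tonelli justifies every interchange. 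If this condition is not implicit in the paper's standing assumptions, I would state it as a hypothesis (or note that outside it the moments are infinite).
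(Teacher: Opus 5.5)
You are following essentially the paper's proof. The paper also takes the atoms $\frac{q^{n}}{n!}(\beta)_{n,\lambda}e^{\beta-\lambda n}_{\lambda}(r\log p)$ from the series in \eqref{laplace-tr-Degen-Pascal}; this is \eqref{Degen-Pascal-formula2}. It then applies $\big(x\frac{d}{dx}\big)^{m}=\sum_{k}S(m,k)x^{k}\frac{d^{k}}{dx^{k}}$ to $(1+\lambda x)^{\beta/\lambda}$ at $x=q/(1+\lambda r\log p)$. Your termwise use of $k^{m}=\sum_{j}S(m,j)\,k(k-1)\cdots(k-j+1)$ together with $(\beta)_{j+l,\lambda}=(\beta)_{j,\lambda}(\beta-\lambda j)_{l,\lambda}$ is that same operator identity written out on monomials. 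Your algebra is correct, as are your correction of the upper limit $n$ to $m$ and the convergence condition $|\lambda|q<1+\lambda r\log p$, which the paper never states.

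The real problem is the one input you flagged as unverified, and the paper's proof rests on exactly the same step. By Taylor's theorem, wherever the series converges,
\[
\sum_{k\geq0}\frac{(qe^{z})^{k}}{k!}(e^{\beta}_{\lambda})^{(k)}(r\log p)=e^{\beta}_{\lambda}(r\log p+qe^{z}).
\]
This is not $e^{\beta}_{\lambda}\big(r\log\frac{p}{1-qe^{z}}\big)$, since $-r\log(1-u)\neq u$. So the weights $w_{k}=\frac{q^{k}}{k!}(e^{\beta}_{\lambda})^{(k)}(r\log p)$ are not the atoms of the measure with the closed-form Laplace transform, that is, of the Gamma mixture \eqref{formula1} used in the rest of the paper. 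Three concrete checks show the mismatch:
\begin{itemize}
\item $\sum_{k}w_{k}=(1+\lambda r\log p+\lambda q)^{\beta/\lambda}$, which is not $1$ in general.
\item For $\beta=1$ and $\lambda\to0^{-}$, the $w_{k}$ tend to $p^{r}q^{k}/k!$, not to the Pascal probabilities $p^{r}\binom{-r}{k}(-q)^{k}$.
\item \eqref{m-th-Moments} gives $M_{\lambda}(1)=q\beta(1+\lambda r\log p+\lambda q)^{\frac{\beta}{\lambda}-1}$. Differentiating the closed form at $z=0$, or averaging the Pascal mean $rsq/p$ against $\nu_{\lambda,\beta}$ (whose mean is $\beta$), gives $\beta rq/p$ instead. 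That is the centring in the paper's own Example, $\mathcal{K}^{(\beta)}_{1,\lambda}(x)=x-\beta\frac{rq}{p}$.
\end{itemize}

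So your argument, like the paper's, establishes \eqref{m-th-Moments} only for the (generally unnormalised) weights $w_{k}$. It does not establish it for $\mathcal{N}\mathcal{B}^{(\lambda)}_{p,r}$ as characterised by its Laplace transform. For that measure, work with the generating function $u\mapsto e^{\beta}_{\lambda}\big(r\log\frac{p}{1-qu}\big)$ instead. Applying Fa\`{a} di Bruno at $u=1$, where the inner function vanishes, gives the factorial moments $(q/p)^{j}\sum_{i=1}^{j}|s(j,i)|\,r^{i}(\beta)_{i,\lambda}$. Hence $M_{\lambda}(m)=\sum_{j=1}^{m}S(m,j)(q/p)^{j}\sum_{i=1}^{j}|s(j,i)|\,r^{i}(\beta)_{i,\lambda}$, which is a genuinely different formula from \eqref{m-th-Moments}.
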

\begin{proof}
Firstly we need to determine the $n$-th derivative of the degenerate
exponential function. If we put $\varphi(t)=e^{\beta}_{\lambda}(t)$
and differentiating \eqref{Degenerate-exp}, then we obtain
$$\begin{array}{lll}
\varphi'(t)&=&\beta(1+\lambda t)^{\frac{\beta}{\lambda}-1}\\
\varphi''(t)&=&\beta(\beta-\lambda)(1+\lambda t)^{\frac{\beta}{\lambda}-2}\\
\varphi^{3}(t)&=&\beta(\beta-\lambda)(\beta-2\lambda)(1+\lambda t)^{\frac{\beta}{\lambda}-3}\\
\varphi^{(4)}(t)&=&\beta(\beta-\lambda)(\beta-2\lambda)(\beta-3\lambda)(1+\lambda
t)^{\frac{\beta}{\lambda}-4}.
\end{array}$$
Thus by continuing the process $n$ number of times, gives
$$\varphi^{(n)}(t)=\displaystyle{(1+\lambda t)^{\frac{\beta}{\lambda}-n}\prod^{n-1}_{k=0}(\beta-k\lambda)
=(\beta)_{n, \lambda}e^{\beta-\lambda n}_{\lambda}(t)},$$ and we get
\begin{equation}\label{Degen-Pascal-formula2}
\mathcal{N}\mathcal{B}^{(\lambda)}_{p,
r}(\{n\})=\displaystyle{\frac{q^{n}}{n!}\varphi^{(n)}(r\log(p))
=\frac{q^{n}}{n!}(\beta)_{n, \lambda}e^{\beta-\lambda
n}_{\lambda}(r\log p)}.
\end{equation}
Hence by using \eqref{Degenerate-exp} and
\eqref{Degen-Pascal-formula2} we get
\begin{eqnarray}\label{moments-F1}
\displaystyle{\int_{\mathbb{R}}x^{m}\mathcal{N}\mathcal{B}^{(\lambda)}_{p,
r}(dx)}&=&\displaystyle{\sum^{\infty}_{n=0}n^{m}\mathcal{N}\mathcal{B}^{(\lambda)}_{p, r}(\{n\})}\nonumber\\
&=&\displaystyle{\sum^{\infty}_{n=0}n^{m}\frac{q^{n}}{n!}(\beta)_{n,
\lambda}e^{\beta-\lambda n}_{\lambda}(r\log p)}\nonumber\\
&=&\displaystyle{\sum^{\infty}_{n=0}n^{m}\frac{q^{n}}{n!}(\beta)_{n,
\lambda}(1+\lambda r\log p)^{\frac{\beta}{\lambda}-n}}\nonumber\\
&=&\displaystyle{(1+\lambda r\log
p)^{\frac{\beta}{\lambda}}\sum^{\infty}_{n=0}n^{m}(\beta)_{n,
\lambda}\frac{(\frac{q}{1+\lambda r\log p})^{n}}{n!}}.
\end{eqnarray}
To handle the $n^{m}$ term, we can use the operator
$\displaystyle{x\frac{d}{dx}}$ which acts on $x^{n}$ as:
$$\displaystyle{x\frac{d}{dx}x^{n}=nx^{n}}.$$
Applying this operator $m$ times gives
$$\displaystyle{\Big(x\frac{d}{dx}\Big)^{m}x^{n}=n^{m}x^{n}}.$$
Thus we obtain
\begin{equation}\label{moments-F2}
\displaystyle{\sum^{\infty}_{n=0}n^{m}(\beta)_{n,
\lambda}\frac{x^{n}}{n!}=\Big(x\frac{d}{dx}\Big)^{m}(1+\lambda
x)^{\frac{\beta}{\lambda}}}.
\end{equation}
Moreover, the operator $\displaystyle{\Big(x\frac{d}{dx}\Big)^{m}}$
can be expressed in terms of $S(m, k)$ as follows:
$$\displaystyle{\Big(x\frac{d}{dx}\Big)^{m}=\sum^{m}_{k=1}S(m, k)x^{k}\frac{d^{k}}{dx^{k}}}.$$
Applying this to $\displaystyle{(1+\lambda
x)^{\frac{\beta}{\lambda}}}$ to obtain
\begin{eqnarray}\label{moments-F3}
\displaystyle{\Big(x\frac{d}{dx}\Big)^{m}(1+\lambda
x)^{\frac{\beta}{\lambda}}}&=&\displaystyle{\sum^{m}_{k=1}S(m,
k)x^{k}\frac{d^{k}}{dx^{k}}(1+\lambda x)^{\frac{\beta}{\lambda}}}\nonumber\\
&=&\displaystyle{\sum^{m}_{k=1}S(m, k)x^{k}(\beta)_{k,
\lambda}e^{\beta-\lambda k}_{\lambda}(x)}.
\end{eqnarray}
Therefore \eqref{moments-F1}, \eqref{moments-F2} and
\eqref{moments-F3} yields
$$M_{\lambda}(m)=\displaystyle{(1+\lambda r\log(p))^{\frac{\beta}{\lambda}}
\sum^{m}_{k=1}S(m, k)\Big(\frac{q}{1+\lambda r\log
p}\Big)^{k}(\beta)_{k, \lambda}e^{\beta-\lambda
k}_{\lambda}\Big(\frac{q}{1+\lambda r\log p}\Big)}.$$
\end{proof}
%==========================================================================================================================
\section{Degenerate Krawtchouk Appell Polynomials}
%===========================================================================================================================
Our first main objective is to introduce via multiplicative
generating function and to study a new system of Appell polynomials
associated with the degenerate Pascal measure that do not form an
orthogonal family.
\begin{defn}
Consider the following entire function $\vartheta$ defined on the
open unit disk by
$$\vartheta(z):=\log\Big(\frac{1+z}{1+qz}\Big), \quad |z|<1.$$
For $z\in \mathbb{C}$ such that $|qe^{z}|<1$, the normalized
Laplace-exponential with respect to
$\mathcal{N}\mathcal{B}^{(\lambda)}_{p, r}$ is defined by
\begin{equation}\label{wick-exp}
\mathcal{N}\!\exp^{\beta}_{\lambda}(z,
x):=\displaystyle{\frac{e^{xz}}{\mathcal{L}_{\mathcal{N}\mathcal{B}^{(\lambda)}_{p,
r}}(z)}
=\frac{e^{xz}}{e^{\beta}_{\lambda}\big[r\log(\frac{p}{1-qe^{z}})\big]}}.
\end{equation}
Therefore, there exist a neighborhood $\mathcal{U}$ of $0\in
\mathbb{C}$ given by
$$\mathcal{U}=\big\{z\in \mathbb{C}, |qe^{z}|<1\big\}$$
such that $\mathcal{N}\!\exp^{\beta}_{\lambda}(\vartheta(\cdot), x)$
is given in terms of the so-called \textit{degenerate Krawtchouk
Appell polynomials} $\mathcal{K}^{(\beta)}_{n, \lambda}$ by
\begin{equation}\label{Gen-functio-form1}
\mathcal{N}\!\exp^{\beta}_{\lambda}(\vartheta(z),
x)=\displaystyle{\sum^{\infty}_{n=0}\frac{z^{n}}{n!}
\mathcal{K}^{(\beta)}_{n, \lambda}(x), \quad\forall z\in
\mathcal{U}},
\end{equation}
with
$$\mathcal{K}^{(\beta)}_{n, \lambda}(x)=\displaystyle{\frac{d^{n}}{dz^{n}}
\Big[\mathcal{N}\!\exp^{\beta}_{\lambda}(\vartheta(z),
x)\Big]_{z=0}}.$$
\end{defn}
Now our goals are to find an explicit form of these polynomials. One
can see that $\mathcal{N}\!\exp^{\beta}_{\lambda}(\vartheta(\cdot),
x)$ can be considered as a generating function of new polynomial
$\mathcal{K}^{(\beta)}_{n, \lambda}(x)$ denoted by
$\Psi_{\lambda}(t, x)$. That is
\begin{equation}\label{Generatin-form2}
\Psi_{\lambda}(t,
x)=\displaystyle{\frac{e^{x\vartheta(t)}}{\mathcal{L}_{\mathcal{N}\mathcal{B}^{(\lambda)}_{p,
r}}(\vartheta(t))}=\mathcal{N}\!\exp^{\beta}_{\lambda}(\vartheta(t),
x)=\sum^{\infty}_{n=0}\frac{z^{n}}{n!}\mathcal{K}^{(\beta)}_{n,
\lambda}(x)}.
\end{equation}
\begin{rem}
The polynomials $\mathcal{K}^{(\beta)}_{n, \lambda}(x)$ do not form
a system of orthogonal polynomials because
$\mathbb{E}_{\mathcal{NB}^{(\lambda)}_{p, r}}(\Psi_{\lambda}(s,
x)\Psi_{\lambda}(t, x))$ is not a function of $st$. In fact, by
using (\ref{Generatin-form3}) then for $s, t>0$, we have
\begin{eqnarray}\label{Eq1}
\mathbb{E}_{\mathcal{NB}^{(\lambda)}_{p, r}}(\Psi_{\lambda}(s,
x)\Psi_{\lambda}(t, x))
&=&\displaystyle{\sum^{\infty}_{k=0}\frac{e^{k(\vartheta(t)+\vartheta(s))}}{\mathcal{L}_{\mathcal{N}\mathcal{B}^{(\lambda)}_{p,
r}}(\vartheta(t))
\mathcal{L}_{\mathcal{N}\mathcal{B}^{(\lambda)}_{p, r}}(\vartheta(s))}\mathcal{NB}^{(\lambda)}_{p, r}(\{k\})}\nonumber\\
&=&\displaystyle{\frac{\mathcal{L}_{\mathcal{N}\mathcal{B}^{(\lambda)}_{p,
r}}(\vartheta(t)+\vartheta(s))}
{\mathcal{L}_{\mathcal{N}\mathcal{B}^{(\alpha)}_{p, r}}(\vartheta(t))
\mathcal{L}_{\mathcal{N}\mathcal{B}^{(\lambda)}_{p, r}}(\vartheta(s))}}\nonumber\\
&=&\displaystyle{\frac{e^{\beta}_{\lambda}\Big[r\log\big(\frac{p(1+qt)(1+qs)}{(1+qt)(1+qs)
-q(1+t)(1+s)}\big)\Big]}{e^{\beta}_{\lambda}(r\log(1+qt))e^{\beta}_{\lambda}(r\log(1+qs))}}.
\end{eqnarray}
\end{rem}
\begin{thm}
The degenerate Krawtchouk Appell polynomials posses the following
form
\begin{equation}\label{Fractional-Krawtchouk-poly}
\mathcal{K}^{(\beta)}_{n,
\lambda}(x)=\displaystyle{\sum^{n}_{k=0}\frac{n!}{(n-k)!}r^{n-k}c_{n-k}(r)\varepsilon_{k}(x)},
\end{equation}
where
\begin{equation}\label{def-omega}
\varepsilon_{k}(x)=\displaystyle{\sum^{k}_{j=0}(-1)^{j}
\binom{x}{k-j}\binom{x+j-1}{k-j}q^{j}},
\end{equation}
and the the coefficient sequence $(c_{n}(r))_{n}$ fulfill the
following recurrence relation:
$$\begin{cases}
c_{0}(r)=1\\
c_{n}(r)=-\displaystyle{\sum^{n}_{i=1}
\binom{n}{i}r^{-i}(e^{\beta}_{\lambda}\circ
\xi_{q})^{(i)}(0)c_{n-i}(r) }\quad\text{with}\quad
\xi_{q}(t)=r\log(1+qt).
\end{cases}
$$
\end{thm}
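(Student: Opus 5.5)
The plan is to compute the generating function $\Psi_{\lambda}(t,x)$ of \eqref{Generatin-form2} in closed form, split it as a product of two power series in $t$, and read off the coefficient of $t^{n}/n!$ through a Cauchy product.

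\textbf{Step 1: simplify the denominator.} Substitute $z=\vartheta(t)$ into the Laplace transform \eqref{laplace-tr-Degen-Pascal}. Since $e^{\vartheta(t)}=\frac{1+t}{1+qt}$ and $p=1-q$,
$$\frac{p}{1-qe^{\vartheta(t)}}=\frac{p(1+qt)}{1+qt-q-qt}=1+qt .$$
Hence
$$\mathcal{L}_{\mathcal{N}\mathcal{B}^{(\lambda)}_{p,r}}(\vartheta(t))=e^{\beta}_{\lambda}\big(r\log(1+qt)\big)=(e^{\beta}_{\lambda}\circ\xi_{q})(t).$$
This is exactly the denominator already visible in the last line of \eqref{Eq1}. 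Therefore
$$\Psi_{\lambda}(t,x)=(1+t)^{x}(1+qt)^{-x}\cdot\frac{1}{(e^{\beta}_{\lambda}\circ\xi_{q})(t)}.$$

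\textbf{Step 2: expand the numerator.} Use the generalized binomial series
$$(1+t)^{x}=\sum_{m}\binom{x}{m}t^{m},\qquad (1+qt)^{-x}=\sum_{j}\binom{-x}{j}q^{j}t^{j}=\sum_{j}(-1)^{j}\binom{x+j-1}{j}q^{j}t^{j}.$$
The Cauchy product gives $\sum_{k}\varepsilon_{k}(x)t^{k}$, where $\varepsilon_{k}(x)$ is the finite sum over $j=0,\dots,k$ of $(-1)^{j}\binom{x}{k-j}\binom{x+j-1}{j}q^{j}$. I would check carefully that this agrees with \eqref{def-omega}. The second binomial there is written with lower index $k-j$, whereas the computation naturally produces lower index $j$. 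If this is a misprint, I would state $\varepsilon_k$ in the form the computation yields.

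\textbf{Step 3: expand the reciprocal.} Set $f=e^{\beta}_{\lambda}\circ\xi_{q}$. Then $f(0)=e^{\beta}_{\lambda}(0)=1$, so $1/f$ is analytic near $0$. Write it as
$$\frac{1}{f(t)}=\sum_{m}c_{m}(r)\frac{(rt)^{m}}{m!}.$$
Multiply by $f(t)=\sum_{i}f^{(i)}(0)t^{i}/i!$ and compare coefficients of $t^{n}/n!$ in $f\cdot(1/f)=1$:
$$\sum_{i=0}^{n}\binom{n}{i}f^{(i)}(0)\,r^{n-i}c_{n-i}(r)=\delta_{n,0}.$$
For $n=0$ this gives $c_{0}=1$. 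For $n\ge1$, isolating the $i=0$ term and dividing by $r^{n}$ gives exactly the stated recurrence
$$c_{n}=-\sum_{i=1}^{n}\binom{n}{i}r^{-i}f^{(i)}(0)\,c_{n-i}.$$

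\textbf{Step 4: combine.} Multiplying the two series,
$$\Psi_{\lambda}(t,x)=\sum_{n}t^{n}\sum_{k=0}^{n}\varepsilon_{k}(x)\frac{r^{n-k}c_{n-k}(r)}{(n-k)!}.$$
Matching with $\sum_{n}\mathcal{K}^{(\beta)}_{n,\lambda}(x)\,t^{n}/n!$ then yields \eqref{Fractional-Krawtchouk-poly}.

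\textbf{Main obstacle.} The only real difficulty is bookkeeping rather than analysis. One has to keep the $1/n!$ normalization and the $r^{m}$ scaling consistent, so that the recurrence for $c_{n}$ comes out with the factor $r^{-i}$. One must also reconcile the indexing of $\varepsilon_{k}$ with \eqref{def-omega}. Convergence is unproblematic: for $|t|$ small, $\vartheta(t)\in\mathcal{U}$ and $f(t)\neq0$, so all series are absolutely convergent and the Cauchy products are justified.
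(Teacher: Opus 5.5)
Your proposal is correct and follows essentially the same route as the paper. You factor $\Psi_{\lambda}(t,x)=\omega(t)^{x}/(e^{\beta}_{\lambda}\circ\xi_{q})(t)$, which you verify explicitly via $p/(1-qe^{\vartheta(t)})=1+qt$ where the paper only asserts it. You then expand $\omega(t)^{x}$ by binomial series and a Cauchy product, and get the recurrence for $c_{n}(r)$ by comparing coefficients in $(e^{\beta}_{\lambda}\circ\xi_{q})\cdot(1/(e^{\beta}_{\lambda}\circ\xi_{q}))=1$. Your suspicion about $\varepsilon_{k}$ is justified: the paper's own proof actually produces $\binom{x+j-1}{j}$, and the printed $\binom{x+j-1}{k-j}$ cannot be right, since for $k=1$ it gives $x(x-1)-q$ instead of $px$, which would make $\mathcal{K}^{(\beta)}_{1,\lambda}$ quadratic rather than $px-\beta rq$.
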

\begin{proof}
By using equation (\ref{Generatin-form2}) we have
\begin{equation}\label{Generatin-form3}
\Psi_{\lambda}(t,
x)=\displaystyle{\frac{\omega(t)^{x}}{e^{\beta}_{\lambda}(r\log(1+qt))}=
\sum^{\infty}_{n=0}\frac{t^{n}}{n!}\mathcal{K}^{(\beta)}_{n,
\lambda}(x)},
\end{equation}
where $\omega(t)=\frac{1+t}{1+qt}$. Note that for $x=0$, we get
\begin{equation}\label{def-phi-t}
\Omega(t):=\Psi_{\lambda}(t,
0)=\displaystyle{\frac{1}{e^{\beta}_{\lambda}(r\log(1+qt))}=
\sum^{\infty}_{n=0}\frac{t^{n}}{n!}\mathcal{K}^{(\beta)}_{n,
\lambda}(0)}
\end{equation}
Now we put
\begin{equation}\label{Gen-fun-at0}
\Omega(t)=\displaystyle{\sum^{\infty}_{k=0}c_{k}(r)\frac{(rt)^{k}}{k!}}.
\end{equation}
The binomial series expansion and cauchy products rules yields
\begin{eqnarray}
\omega(t)^{x}&=&
\displaystyle{\sum^{\infty}_{n=0}\Big[\sum^{n}_{j=0}
(-1)^{j}\binom{x}{n-j}\binom{x+j-1}{j}q^{k}\Big]t^{n}}\nonumber\\
&=&\displaystyle{\sum^{\infty}_{k=0}\varepsilon_{k}(x)t^{k}},
\end{eqnarray}
where $\varepsilon_{n}$ is given as in equation (\ref{def-omega}).
Thus by using equation (\ref{Gen-fun-at0}) we get
\begin{eqnarray}\label{Generatin-form4}
\Psi_{\lambda}(t, x)&=&\displaystyle{\omega(t)^{x}\sum^{\infty}_{k=0}c_{k}(r)\frac{(rt)^{k}}{k!}}\nonumber\\
&=&\displaystyle{\sum^{\infty}_{k=0}\varepsilon_{k}(x)t^{k}\sum^{\infty}_{k=0}c_{k}(r)\frac{(rt)^{k}}{k!}}\nonumber\\
&=&\displaystyle{\sum^{\infty}_{n=0}t^{n}\Big(\sum^{n}_{k=0}\frac{r^{n-k}}{(n-k)!}\varepsilon_{k}(x)c_{n-k}(r)\Big)}.
\end{eqnarray}
Thus (\ref{Generatin-form3}) and (\ref{Generatin-form4}) yields
$$\mathcal{K}^{(\beta)}_{n, \lambda}(x)=\displaystyle{\sum^{n}_{k=0}\frac{n!}{(n-k)!}r^{n-k}c_{n-k}(r)\varepsilon_{k}(x)}.$$
Now our gaol to find coefficients $c_{n}(r)$. So by using
\eqref{def-phi-t} and \eqref{Gen-fun-at0} we have
\begin{equation}\label{condition-coeff}
\displaystyle{e^{\beta}_{\lambda}\circ
\xi_{q}(t)\cdot\sum^{n}_{k=0}c_{k}(r)\frac{(rt)^{k}}{k!}=1}.
\end{equation}
That's to say
$$\displaystyle{\Big(\sum^{\infty}_{k=0}\frac{(e^{\beta}_{\lambda}\circ \xi_{q})^{(k)}(0)}{k!}t^{k}\Big)
\Big(\sum^{n}_{k=0}c_{k}(r)\frac{(rt)^{k}}{k!}\Big)=1}.$$ Moreover
we have
$$\displaystyle{\Big(\sum^{\infty}_{k=0}\frac{(e^{\beta}_{\lambda}\circ \xi_{q})^{(k)}(0)}{k!}t^{k}\Big)
\Big(\sum^{n}_{k=0}c_{k}(r)\frac{(rt)^{k}}{k!}\Big)=\sum^{\infty}_{n=0}\gamma_{n}(r)\frac{t^{n}}{n!}},$$
with
\begin{eqnarray}\label{def-Cnr}
\gamma_{n}(r)&=&\displaystyle{\sum^{n}_{i=0}\frac{(e^{\beta}_{\lambda}\circ \xi_{q})^{(i)}(0)}{i!}\frac{n!c_{n-i}(r)r^{n-i}}{(n-i)!}}\nonumber\\
&=&\displaystyle{\sum^{n}_{i=0}
\binom{n}{i}r^{n-i}(e^{\beta}_{\lambda}\circ
\xi_{q})^{(i)}(0)c_{n-i}(r)}.
\end{eqnarray}
From equations \eqref{condition-coeff} and \eqref{def-Cnr} we
conclude that
$$\gamma_{0}(r)=1,\quad \gamma_{n}(r)=0\quad \forall n\geq1.$$
Thus we get
$$c_{0}(r)=\gamma_{0}(r)=1,$$
and
$$\displaystyle{\sum^{n}_{i=0}
\binom{n}{i}r^{n-i}(e^{\beta}_{\lambda}\circ
\xi_{q})^{(i)}(0)c_{n-i}(r)=0\quad\forall n\geq1},$$ which gives the
following recurrence formula
$$c_{n}(r)=-\displaystyle{\sum^{n}_{i=1}
\binom{n}{i}r^{-i}(e^{\beta}_{\lambda}\circ
\xi_{q})^{(i)}(0)c_{n-i}(r)}.$$
\end{proof}
\begin{ex}
A simple calculus one can see that the first and the second
coefficients are given by
$$
c_{1}(r)=\displaystyle{-\frac{1}{r}(e^{\beta}_{\lambda}\circ
\xi_{q})'(0)c_{0}(r)=-q\beta},
$$
$$\begin{array}{lll}
c_{2}(r)&=&\displaystyle{-\frac{2}{r}r^{-1}(e^{\beta}_{\lambda}\circ \xi_{q})'(0)c_{1}(r)-r^{-2}(e^{\beta}_{\lambda}\circ \xi_{q})''(0)c_{0}(r)}\\
&=&\displaystyle{\frac{2q^{2}\beta^{2}}{r}-\frac{\beta
q^{2}}{r}+(1-\frac{\beta}{\lambda})\lambda\beta q^{2}}.
\end{array}$$
As a consequence we can present the first three degenerate
Krawtchouk Appell polynomials:
$$\begin{array}{lll}
\mathcal{K}^{(\beta)}_{0, \lambda}(x)&=&c_{0}(r)=1\\
\mathcal{K}^{(\beta)}_{1, \lambda}(x)&=&\displaystyle{x-\beta\frac{rq}{p}}\\
\mathcal{K}^{(\beta)}_{2, \lambda}(x)&=&\displaystyle{x^{2}-\Big(\frac{2rq\beta}{p}+\frac{1-q^{2}}{p^{2}}\Big)x}\\
&+&\displaystyle{\frac{r^{2}}{p^{2}}\Big(\frac{2q^{2}\beta^{2}}{r}-\frac{\beta
q^{2}}{r}+(1-\frac{\beta}{\lambda})\lambda\beta q^{2}\Big)}
\end{array}$$
At $\beta= 1$ and $\lambda\rightarrow 0$ these polynomials become
the well known Krawtchouk polynomials.
\end{ex}
Now, consider the Taylor expansion of normalized Laplace-exponential
at $z$ given in terms of Appell polynomials $P^{(\beta)}_{n,
\lambda}, n \in \mathbb{N}$ by
\begin{equation}\label{Exp-alpha-1}
\mathcal{N}\!\exp^{\beta}_{\lambda}(z,
x)=\displaystyle{\frac{e^{xz}}{\mathcal{L}_{\mathcal{N}\mathcal{B}^{(\lambda)}_{p,
r}}(z)}= \sum^{\infty}_{n=0}\frac{z^{n}}{n!}P^{(\beta)}_{n,
\lambda}(x)},\quad z\in \mathcal{U}.
\end{equation}
\begin{thm}
The polynomials $\mathcal{K}^{(\beta)}_{n, \lambda}$ satisfy the
following useful combinatorial properties:
\begin{itemize}
\item [\textbf{(P1)}]
$\mathcal{K}^{(\beta)}_{n, \lambda}(x)=\displaystyle{\sum^{n}_{m=0}
\frac{\varpi(m, n)}{m!}P^{(\beta)}_{m, \lambda}(x)}$, where
$\varpi(0, 0):=1$ and
\begin{equation}\label{def-lambda-mn}
\varpi(m,
n):=\displaystyle{\sum_{i_{1}+\ldots+i_{m}=n}(-1)^{n+m}n!\prod^{m}_{j=1}\Big(\frac{1-q^{i_{j}}}{i_{j}}\Big)}.
\end{equation}
\item [\textbf{(P2)}]
$x^{n}=\displaystyle{\sum^{n}_{k=0}\sum^{k}_{m=0}\binom{n}{k}
\frac{\varrho(m, k)}{m!}\mathcal{K}^{(\beta)}_{m,
\lambda}(x)M_{\lambda}(n-k)}$, where $M_{\lambda}(k)$ is the $k$-th
moment given by equation \eqref{m-th-Moments} and
$$\begin{cases}
\varrho(0, 0):=1\\
\varrho(m,
k):=\displaystyle{\sum_{i_{1}+\cdots+i_{m}=k}\frac{k!}{i_{1}!\ldots
i_{m}!}\prod^{m}_{j=1}\frac{d^{i_{j}}}{dz^{i_{j}}}\Big[\frac{e^{z}-1}{1-qe^{z}}\Big]_{z=0}
}.
\end{cases}
$$
\item [\textbf{(P3)}]
$\mathcal{K}^{(\beta)}_{n,
\lambda}(x+y)=\displaystyle{\sum_{k+l+m=n}
\frac{n!}{k!l!m!}\mathcal{K}^{(\beta)}_{k,
\lambda}(x)\mathcal{K}^{(\beta)}_{l,
\lambda}(x)(e^{\beta}_{\lambda}\circ \xi_{q})^{(m)}(0)}$.
\item [\textbf{(P4)}]
$\mathcal{K}^{(\beta)}_{n,
\lambda}(x+y)=\displaystyle{\sum^{n}_{k=0}\binom{n}{k}\mathcal{K}^{(\beta)}_{k,
\lambda}(x)[y]_{n-k}}$,
\end{itemize}
where
\begin{equation}\label{def-of-[y]m}
[y]_{n}:=\displaystyle{\sum^{n}_{k=0}\binom{n}{k}q^{k}(y)_{1,
n-k}(-y)_{1, k}}.
\end{equation}
\end{thm}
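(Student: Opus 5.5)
All four identities will be obtained by manipulating generating functions in the formal variable $t$, using \eqref{Generatin-form2}, \eqref{Generatin-form3} and \eqref{Exp-alpha-1}, and then comparing coefficients of $t^n/n!$.

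\textbf{(P1).} I substitute $z=\vartheta(t)$ into \eqref{Exp-alpha-1}. This gives
\[
\Psi_\lambda(t,x)=\sum_{m\ge0}\frac{\vartheta(t)^m}{m!}P^{(\beta)}_{m,\lambda}(x).
\]
Next I expand
\[
\vartheta(t)=\log(1+t)-\log(1+qt)=\sum_{i\ge1}\frac{(-1)^{i+1}(1-q^i)}{i}\,t^i .
\]
Raising this to the $m$-th power via the multinomial/Cauchy product yields
\[
\vartheta(t)^m=\sum_{n\ge m}\frac{t^n}{n!}\,\varpi(m,n).
\]
The sign works out because $\prod_j(-1)^{i_j+1}=(-1)^{n+m}$. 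Since $\vartheta(t)^m=O(t^m)$, the coefficient of $t^n/n!$ only involves $m\le n$, and comparing with \eqref{Gen-functio-form1} gives (P1).

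\textbf{(P2).} I invert the relation $z=\vartheta(t)$. Solving $e^z=(1+t)/(1+qt)$ gives
\[
t=\frac{e^z-1}{1-qe^z}=:h(z),
\]
which is analytic near $0$ with $h(0)=0$. From \eqref{Generatin-form3} written with $t=h(z)$, I get
\[
e^{xz}=\mathcal L_{\mathcal{NB}^{(\lambda)}_{p,r}}(z)\sum_{m}\frac{h(z)^m}{m!}\mathcal K^{(\beta)}_{m,\lambda}(x).
\]
By Faà di Bruno/multinomial expansion,
\[
\frac{h(z)^m}{m!}=\sum_{k\ge m}\frac{\varrho(m,k)}{m!}\frac{z^k}{k!},
\]
with $\varrho$ exactly as defined. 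The Laplace transform expands as $\sum_j M_\lambda(j)z^j/j!$, which is a moment generating function whose coefficients are given by \eqref{m-th-Moments}. A Cauchy product in $z$ and comparison of the coefficients of $z^n/n!$ then give (P2).

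\textbf{(P3) and (P4).} Both follow from the multiplicative structure of \eqref{Generatin-form3}. For (P4), I write $\omega(t)^{x+y}=\omega(t)^x\,\omega(t)^y$, so that
\[
\Psi_\lambda(t,x+y)=\Psi_\lambda(t,x)\,\omega(t)^y .
\]
It then suffices to show $\omega(t)^y=\sum_n [y]_n\,t^n/n!$. This comes from $(1+t)^y=\sum (y)_{1,n}t^n/n!$ and $(1+qt)^{-y}=\sum (-y)_{1,k}q^kt^k/k!$, whose Cauchy product is exactly \eqref{def-of-[y]m}.

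For (P3), I use
\[
\Psi_\lambda(t,x+y)=\Psi_\lambda(t,x)\,\Psi_\lambda(t,y)\,e^\beta_\lambda(\xi_q(t)).
\]
This holds because two copies of the denominator $e^\beta_\lambda(\xi_q(t))$ appear on the right while only one appears on the left. Expanding $e^\beta_\lambda\circ\xi_q$ by Taylor at $0$ and taking the triple Cauchy product gives the trinomial formula. Note that the second factor should read $\mathcal K^{(\beta)}_{l,\lambda}(y)$, which I take to be the intended statement.

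The main obstacle is (P2). One must check that $h$ is genuinely the compositional inverse of $\vartheta$ on a neighbourhood of $0$ compatible with $\mathcal U$, so that substituting $t=h(z)$ into \eqref{Generatin-form3} is legitimate rather than merely formal. It is also the step where the indices have to be kept straight: the triangular sum over $m\le k$ coming from $h(z)^m=O(z^m)$, combined with the binomial convolution against $M_\lambda(n-k)$.
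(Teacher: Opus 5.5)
Your proposal is correct and follows the paper's proof essentially step by step. It uses the multinomial expansion of $\vartheta(t)^m$ for (P1), and for (P2) the substitution of the inverse $(e^{z}-1)/(1-qe^{z})$ of $\vartheta$ together with the moment expansion of $\mathcal{L}_{\mathcal{N}\mathcal{B}^{(\lambda)}_{p,r}}$; for (P3) and (P4) it uses the factorizations $\Psi_\lambda(t,x+y)=\Psi_\lambda(t,x)\Psi_\lambda(t,y)\,e^{\beta}_{\lambda}(\xi_q(t))$ and $\Psi_\lambda(t,x+y)=\Psi_\lambda(t,x)\,\omega(t)^{y}$, and you are right that the second factor in (P3) must be $\mathcal{K}^{(\beta)}_{l,\lambda}(y)$, a typo shared by the paper's statement and proof.
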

\begin{proof}
\textbf{(P1)}
Firstly by using equation (\ref{Exp-alpha-1}), we get
$$\Psi_{\lambda}(x, z)=\displaystyle{\frac{e^{x\vartheta(z)}}{\mathcal{L}_{\mathcal{NB}^{(\lambda)}_{p, r}}(\vartheta(z))}
=\sum^{\infty}_{m=0}\frac{\vartheta(z)^{m}}{m!}A^{(\beta)}_{m,
\lambda}},$$ for any $z\in \mathcal{U}$. It is obvious that
$$\vartheta(z)=\log\Big(\frac{1+z}{1+qz}\Big)=\displaystyle{\sum^{\infty}_{k=0}\frac{z^{k}}{k!}\eta_{k}},$$
with
$$
\eta_{k}=\displaystyle{(-1)^{k+1}(k-1)!(1-q^{k}) \quad\forall
k>0,\quad \eta_{0}=0}.
$$
Therefore with the help of Eq. \eqref{def-lambda-mn} we have\\

$\displaystyle{\sum^{\infty}_{n=0}\frac{z^{n}}{n!}\mathcal{K}^{(\beta)}_{n,
\lambda}(x)}$
$$\begin{array}{lll}
\qquad&=&P^{(\beta)}_{0,
\lambda}+\displaystyle{\sum^{\infty}_{m=1}\frac{1}{m!}\Big(\sum^{\infty}_{k=0}
\frac{z^{k}}{k!}\eta_{k}\Big)^{m}P^{(\beta)}_{m, \lambda}(x)}\\
&=&P^{(\beta)}_{0,
\lambda}+\displaystyle{\sum^{\infty}_{m=1}\frac{1}{m!}\sum^{\infty}_{n=m=0}\frac{z^{n}}{n!}
\sum_{i_{1}+\cdots+i_{m}=n}\frac{n!}{i_{1}!\cdots i_{m}!}\prod^{m}_{j=1}\eta_{i_{j}}P^{(\beta)}_{m, \lambda}(x)}\\
&=&P^{(\beta)}_{0,
\lambda}+\displaystyle{\sum^{\infty}_{m=1}\frac{1}{m!}\sum^{\infty}_{n=m}\frac{z^{n}}{n!}
\sum_{i_{1}+\cdots+i_{m}=n}(-1)^{n+m}n!\prod^{m}_{j=1}\Big(\frac{1-q^{i_{j}}}{i_{j}}\Big)P^{(\beta)}_{m, \lambda}(x)}\\
&=&\displaystyle{\sum^{\infty}_{n=1}\frac{z^{n}}{n!}\sum^{n}_{m=0}\frac{\varpi(m,
n)}{m!}P^{(\beta)}_{m, \lambda}(x)}.
\end{array}$$
Hence we obtain
$$\displaystyle{\mathcal{K}^{(\beta)}_{n, \lambda}(x)=\displaystyle{\sum^{n}_{m=0}
\frac{\varpi(m, n)}{m!}P^{(\beta)}_{m, \lambda}(x)}}.$$
\begin{itemize}
\item [\textbf{(P2)}]
Let $\zeta$ be the inverse function of $\vartheta$. Then one can see
that
$$\zeta(z)=\displaystyle{\frac{e^{z}-1}{1-qe^{z}}=\sum^{\infty}_{k=0}\kappa_{k}\frac{z^{k}}{k!},}$$
where
$$\kappa_{k}=\displaystyle{\frac{d^{k}}{dz^{k}}\Big[\frac{e^{z}-1}{1-qe^{z}}\Big]_{z=0}}.$$
By using equation (\ref{wick-exp}), we get
$$\displaystyle{\frac{e^{xz}}{\mathcal{L}_{\mathcal{NB}^{(\lambda)}_{p, r}}(z)}=
\frac{e^{xz}}{e^{\beta}_{\lambda}\Big(r\log(\frac{p}{1-qe^{z}})\Big)}=
\sum^{\infty}_{m=0}\frac{\zeta(z)^{m}}{m!}\mathcal{K}^{(\beta)}_{m,
\lambda}(x)},\quad z\in \mathcal{U}.$$
So we have\\

$\displaystyle{\sum^{\infty}_{n=0}\frac{z^{n}}{n!}A^{(\beta)}_{n,
\lambda}(x)}=\mathcal{K}^{(\beta)}_{0, \lambda}(x)
+\displaystyle{\sum^{\infty}_{m=1}\frac{1}{m!}\Big(\sum^{\infty}_{k=1}\kappa_{k}\frac{z^{k}}{k!}\Big)^{m}\mathcal{K}^{(\beta)}_{m,
\lambda}(x)} $
$$\begin{array}{lll}
&=&\quad \mathcal{K}^{(\beta)}_{0,
\lambda}(x)+\displaystyle{\sum^{\infty}_{m=1}
\frac{1}{m!}\sum^{\infty}_{n=m}\frac{z^{n}}{n!}\sum_{i_{1}+\ldots+i_{m}=n}\frac{n!}{i_{1}!\ldots
i_{m}!}\prod^{m}_{j=1}\kappa_{i_{j}}
\mathcal{K}^{(\beta)}_{m, \lambda}(x)}\\
&=&\displaystyle{\sum^{\infty}_{n=0}\frac{z^{n}}{n!}
\sum^{n}_{m=0}\frac{\varrho(m, n)}{m!}\mathcal{K}^{(\beta)}_{m,
\lambda}(x)}.
\end{array}$$
Thus, we have
\begin{equation}\label{Aalphan}
P^{(\beta)}_{n,
\lambda}(x)=\displaystyle{\sum^{n}_{m=0}\frac{\varrho(m,
n)}{m!}\mathcal{K}^{(\beta)}_{m, \lambda}(x)}.
\end{equation}
On the other hand since
$$e^{xz}=\mathcal{N}\!\exp^{\beta}_{\lambda}(z, x)\mathcal{L}_{\mathcal{NB}^{(\lambda)}_{p, r}}(z),$$
we get
$$\begin{array}{lll}
\displaystyle{\sum^{\infty}_{n=0}\frac{x^{n}}{n!}z^{n}}&=&\displaystyle{\sum^{\infty}_{k=0}\frac{z^{k}}{k!}P^{(\beta)}_{k,
\lambda}(x)
\sum^{\infty}_{k=0}\frac{z^{k}}{k!}M_{\lambda}(k)}\\
&=&\displaystyle{\sum^{\infty}_{n=0}\frac{z^{n}}{n!}\Big(\sum^{n}_{k=0}\binom{n}{k}
P^{(\beta)}_{k, \lambda}(x)M_{\lambda}(n-k)\Big)}.
\end{array}$$
Hence with the help of Eq. \eqref{Aalphan}, we obtain
\begin{eqnarray}\label{eq-power-xn}
x^{n}&=&\displaystyle{\sum^{n}_{k=0}\binom{n}{k}
P^{(\beta)}_{k, \lambda}(x)M_{\lambda}(n-k)}\\
&=&\displaystyle{\sum^{n}_{k=0}\sum^{k}_{m=0}
\binom{n}{k}\frac{\varrho(m, k)}{m!}\mathcal{K}^{(\beta)}_{m,
\lambda}(x)M_{\lambda}(n-k)}.\nonumber
\end{eqnarray}
\item [\textbf{(P3)}]
By using the fact that
$$\mathcal{N}\!\exp^{\beta}_{\lambda}(\vartheta(z), x+y)=\mathcal{N}\!\exp^{\beta}_{\lambda}(\vartheta(z), x)
\mathcal{N}\!\exp^{\beta}_{\lambda}(\vartheta(z),
y)\mathcal{L}_{\mathcal{NB}^{(\lambda)}_{p, r}}(\vartheta(z)),$$ and
$$
\mathcal{L}_{\mathcal{NB}^{(\lambda)}_{p,
r}}(\vartheta(z))=e^{\beta}_{\lambda}\big(r\log(1+qz)\big)
=\displaystyle{\sum^{\infty}_{k=0}\frac{(e^{\beta}_{\lambda}\circ
\xi_{q})^{(k)}(0)}{k!}z^{k}},$$ we obtain
$$\begin{array}{lll}
\displaystyle{\sum^{\infty}_{n=0}\frac{z^{n}}{n!}\mathcal{K}^{(\beta)}_{n,
\lambda}(x+y)}
&=&\displaystyle{\sum^{\infty}_{k=0}\frac{z^{k}}{k!}\mathcal{K}^{(\beta)}_{k,
\lambda}(x)
\sum^{\infty}_{l=0}\frac{z^{l}}{l!}\mathcal{K}^{(\beta)}_{l,
\lambda}(y)
\sum^{\infty}_{m=0}\frac{z^{m}}{m!}\mathcal{K}^{(\beta)}_{m, \lambda}(x)}\\
&=&\displaystyle{\sum^{\infty}_{n=0}\frac{z^{n}}{n!}\sum_{k+l+m=n}
\frac{n!}{k!l!m!}\mathcal{K}^{(\beta)}_{k,
\lambda}(x)\mathcal{K}^{(\beta)}_{l,
\lambda}(x)(e^{\beta}_{\lambda}\circ \xi_{q})^{(m)}(0)}.
\end{array}$$
This gives the statement.
\item [\textbf{(P4)}]
Recall that, for $n\in \mathbb{N}$ and  $y\in \mathbb{C}$, the
generating function of the classical falling factorials
$(y)_{n}:=(y)_{1, n}$ obtained by \eqref{Degenerate-falling} with
$\lambda=1$ is
$$
\displaystyle{\sum^{\infty}_{n=0}\frac{z^{n}}{n!}(y)_{n}=\exp[y\log(1+z)]}.
$$
Then one can directly see that
\begin{eqnarray}\label{Taylor-exp-exp-v}
\exp(y\vartheta(z))&=&\exp(y\log(1+z)-y\log(1+qz))\nonumber\\
&=&\displaystyle{\sum^{\infty}_{n=0}[y]_{n}\frac{z^{n}}{n!}},
\end{eqnarray}
where
$$
[y]_{n}:=\displaystyle{\sum^{n}_{k=0}\binom{n}{k}q^{k}(y)_{n-k}(-y)_{k}}.
$$
It is clear that
$$\mathcal{N}\!\exp^{\beta}_{\lambda}(\vartheta(z), x+y)=\mathcal{N}\!\exp^{\beta}_{\lambda}(\vartheta(z), x)\exp(y\vartheta(z)).$$
Then with the help of equation \eqref{Taylor-exp-exp-v} we get
$$\begin{array}{lll}
\displaystyle{\sum^{\infty}_{n=0}\frac{z^{n}}{n!}\mathcal{K}^{(\beta)}_{n,
\lambda}(x+y)}&=&
\displaystyle{\sum^{\infty}_{k=0}\frac{z^{k}}{k!}\mathcal{K}^{(\beta)}_{k, \lambda}(x)\sum^{\infty}_{k=0}\frac{z^{k}}{k!}[y]_{k}}\\
&=&\displaystyle{\sum^{\infty}_{n=0}\frac{z^{n}}{n!}\sum_{k+m=n}\frac{n!}{k!m!}\mathcal{K}^{(\beta)}_{k, \lambda}(x)[y]_{m}}\\
&=&\displaystyle{\sum^{\infty}_{n=0}\frac{z^{n}}{n!}\sum^{n}_{k=0}
\binom{n}{k}\mathcal{K}^{(\beta)}_{k, \lambda}(x)[y]_{n-k}}.
\end{array}$$
Hence we deduce the expansion of $\mathcal{K}^{(\beta)}_{n,
\lambda}(x+y)$ given by
$$\mathcal{K}^{(\beta)}_{n, \lambda}(x+y)=\displaystyle{\sum^{n}_{k=0}
\binom{n}{k} \mathcal{K}^{(\beta)}_{k, \lambda}(x)[y]_{n-k}}.$$
\end{itemize}
\end{proof}
\begin{cor}
The polynomials $\mathcal{K}^{(\beta)}_{n, \lambda}$ has the
following form
\begin{equation}\label{Poly-Stir}
\mathcal{K}^{(\beta)}_{n,
\lambda}(x)=\displaystyle{\sum^{n}_{k=0}\sum^{k}_{j=0}(-1)^{k-j}\sum^{n-k-j}_{m=j}\binom{n}{m}s(m,
j)s(n-m, k-j)q^{n-m}P^{(\beta)}_{k, \lambda}(x)},
\end{equation}
where $s(n, k)$ is the Stirling numbers of the first kind defined
via its generating function by
\begin{equation}\label{Stir-1}
\displaystyle{\log(1+z)^{k}=k!\sum^{\infty}_{n=k}s(n,
k)\frac{z^{n}}{n!}}.
\end{equation}
\end{cor}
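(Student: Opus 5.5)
The plan is to go back to the proof of (P1) and replace the Cauchy-product description of the powers $\vartheta(z)^m$ by an expansion through Stirling numbers of the first kind. From the definition \eqref{Gen-functio-form1} and the Taylor expansion \eqref{Exp-alpha-1} of the normalized Laplace-exponential, we have, for $z$ near $0$,
$$\sum_{n\ge0}\frac{z^n}{n!}\mathcal{K}^{(\beta)}_{n,\lambda}(x)=\mathcal{N}\!\exp^{\beta}_{\lambda}(\vartheta(z),x)=\sum_{k\ge0}\frac{\vartheta(z)^k}{k!}P^{(\beta)}_{k,\lambda}(x).$$
It therefore suffices to compute the coefficient of $z^n/n!$ in $\vartheta(z)^k/k!$.

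Write $\vartheta(z)=\log(1+z)-\log(1+qz)$ and expand binomially:
$$\frac{\vartheta(z)^k}{k!}=\sum_{j=0}^{k}(-1)^{k-j}\frac{\log(1+z)^j}{j!}\,\frac{\log(1+qz)^{k-j}}{(k-j)!}.$$
Apply \eqref{Stir-1} to each factor, the second one with $z$ replaced by $qz$:
$$\frac{\log(1+z)^j}{j!}=\sum_{m\ge j}s(m,j)\frac{z^m}{m!},\qquad \frac{\log(1+qz)^{k-j}}{(k-j)!}=\sum_{l\ge k-j}s(l,k-j)q^l\frac{z^l}{l!}.$$
The Cauchy product of these two exponential series gives the coefficient of $z^n/n!$ as
$$\sum_m \binom{n}{m}s(m,j)\,s(n-m,k-j)\,q^{n-m}.$$

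Substituting this into the identity above and comparing coefficients of $z^n/n!$ yields \eqref{Poly-Stir}. Only $k\le n$ contributes, since $\vartheta^k=O(z^k)$. Interchanging the summations over $k$ and $n$ is legitimate for $|z|$ small, because both expansions are convergent power series in a neighbourhood of $0$, and the identity also holds at the level of formal power series.

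The main obstacle is bookkeeping the range of the inner index $m$. The term is nonzero only when $m\ge j$ and $n-m\ge k-j$, that is, for $j\le m\le n-k+j$. The stated upper limit $n-k-j$ must be compared with this. Since $s(a,b)=0$ for $a<b$, any extension of the range only adds zero terms. However, if $n-k-j<n-k+j$, that is when $j\ge1$, the written range may omit nonzero terms. I would therefore record the natural range $j\le m\le n-k+j$ explicitly, or equivalently sum over all $0\le m\le n$ with the Stirling numbers enforcing the constraints, and note that the corollary holds under this reading of the summation bounds.
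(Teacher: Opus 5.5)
Your argument is correct and follows the paper's route: binomially expand $\vartheta(z)^k=(\log(1+z)-\log(1+qz))^k$, apply the Stirling generating function to each factor (with $z\mapsto qz$ in the second), form the Cauchy product, and compare coefficients of $z^n/n!$ in $\sum_k \vartheta(z)^k P^{(\beta)}_{k,\lambda}(x)/k!$. Your worry about the inner range is justified: the correct range is $j\le m\le n-k+j$, and the printed bound $n-k-j$ (which the paper's proof simply asserts) does drop nonzero terms — for $n=k=1$ it gives the coefficient $-q$ instead of $1-q=p$, whereas in fact $\mathcal{K}^{(\beta)}_{1,\lambda}=p\,P^{(\beta)}_{1,\lambda}$ — so the corollary holds only with $n-k-j$ replaced by $n-k+j$.
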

\begin{proof}
By applying the binomial series and by using \eqref{Stir-1} we get
$$\begin{array}{lll}
\vartheta(z)^{k}&=&(\log(1+z)-\log(1+qz))^{k}\\
&=&\displaystyle{k!\sum^{\infty}_{n=k}\frac{z^{n}}{n!}\sum^{k}_{j=0}(-1)^{k-j}\sum^{n-k-j}_{m=j}\binom{n}{m}s(m,
j)s(n-m, k-j)q^{n-m}}.
\end{array}$$
Then we obtain\\

$\displaystyle{\sum^{\infty}_{n=0}\frac{z^{n}}{n!}\mathcal{K}^{(\beta)}_{n,
\lambda}(x)}$
$$\begin{array}{lll}
\qquad&=&P^{(\beta)}_{0,
\lambda}(x)+\displaystyle{\sum^{\infty}_{k=1}\frac{1}{k!}(\vartheta(z))^{k}P^{(\beta)}_{m, \lambda}(x)}\\
&=&P^{(\beta)}_{0,
\lambda}(x)\\
&+&\displaystyle{\sum^{\infty}_{k=1}\sum^{\infty}_{n=k}\frac{z^{n}}{n!}
\sum^{k}_{j=0}(-1)^{k-j}\sum^{n-k-j}_{m=j}\binom{n}{m}s(m, j)s(n-m,
k-j)q^{n-m}P^{(\beta)}_{m, \lambda}(x)}\\
&=&\displaystyle{\sum^{\infty}_{n=0}\frac{z^{n}}{n!}\sum^{n}_{k=0}
\sum^{k}_{j=0}(-1)^{k-j}\sum^{n-k-j}_{m=j}\binom{n}{m}s(m, j)s(n-m,
k-j)q^{n-m}P^{(\beta)}_{m, \lambda}(x)}.
\end{array}$$
As a consequence we obtain the explicit expression of
$\mathcal{K}^{(\beta)}_{n, \lambda}(x)$ as in eq. \eqref{Poly-Stir}.
\end{proof}
\begin{cor}
The Appell polynomials $P^{(\beta)}_{n, \lambda}$ has the following
form in terms of $\mathcal{K}^{(\beta)}_{n, \lambda}(x)$:
\begin{equation}\label{Poly-Stir-2}
P^{(\beta)}_{n,
\lambda}(x)=\displaystyle{\sum^{n}_{k=0}\Big(\sum^{n}_{j=k}\binom{j-1}{k-1}\frac{q^{j-k}}{p^{j}}
j!S(n, j)\mathcal{K}^{(\beta)}_{k, \lambda}(x)}.
\end{equation}
\end{cor}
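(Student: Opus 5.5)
The plan is to start from the relation \eqref{Aalphan} established in the proof of (P2):
$$P^{(\beta)}_{n,\lambda}(x)=\sum^{n}_{m=0}\frac{\varrho(m,n)}{m!}\mathcal{K}^{(\beta)}_{m,\lambda}(x).$$
It then suffices to identify $\varrho(k,n)/k!$ with
$$\sum^{n}_{j=k}\binom{j-1}{k-1}\frac{q^{j-k}}{p^{j}}\,j!\,S(n,j).$$
By the definition of $\varrho(k,n)$ as a multinomial convolution of the Taylor coefficients $\kappa_i$ of $\zeta$, we have
$$\sum_{n\ge k}\varrho(k,n)\frac{z^{n}}{n!}=\zeta(z)^{k}, \qquad \zeta(z)=\frac{e^{z}-1}{1-qe^{z}} .$$
So the task reduces to computing the coefficients of $\zeta(z)^{k}/k!$ and showing they have the stated form.

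The key step is to rewrite $\zeta$ as a function of $u:=e^{z}-1$. Since $1-qe^{z}=p-qu$, we get
$$\zeta(z)=\frac{u}{p-qu}=\frac{u}{p}\Big(1-\frac{q}{p}u\Big)^{-1}.$$
Raising this to the $k$-th power and expanding the negative binomial series gives
$$\zeta(z)^{k}=\sum_{i\ge0}\binom{k+i-1}{i}\frac{q^{i}}{p^{k+i}}u^{k+i}=\sum_{j\ge k}\binom{j-1}{k-1}\frac{q^{j-k}}{p^{j}}(e^{z}-1)^{j},$$
using $\binom{j-1}{j-k}=\binom{j-1}{k-1}$.

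Next, insert \eqref{Stir-2}, namely $(e^{z}-1)^{j}=j!\sum_{n\ge j}S(n,j)z^{n}/n!$. Interchanging the two sums, which is legitimate for small $|z|$, gives
$$\zeta(z)^{k}=\sum_{n\ge k}\frac{z^{n}}{n!}\sum_{j=k}^{n}\binom{j-1}{k-1}\frac{q^{j-k}}{p^{j}}\,j!\,S(n,j).$$
The upper limit $j\le n$ holds because $S(n,j)=0$ for $j>n$. Comparing coefficients then identifies $\varrho(k,n)$. Substituting into \eqref{Aalphan} yields \eqref{Poly-Stir-2}.

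The main obstacle is normalisation bookkeeping rather than any structural difficulty. The generating function naturally produces $\varrho(k,n)/k!$, so the prefactor must be checked against the stated formula: either a $1/k!$ is absorbed by the convention on $\varrho$, or the stated coefficient implicitly includes it. The case $k=0$ also needs separate handling. There $\binom{j-1}{-1}$ is interpreted as $\delta_{j0}$, so only $n=0$ contributes, and this is consistent with $\varrho(0,0)=1$. I would verify both points against small cases, such as $n=1$ where $\zeta'(0)=1/p$, before finalising the statement of the constants.
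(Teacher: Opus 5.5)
Your route is the paper's own. You write $\zeta=u/(p-qu)$ with $u=e^{z}-1$, expand $\zeta^{k}$ by the negative binomial series, insert \eqref{Stir-2}, and read off coefficients in $\sum_{k}\zeta(z)^{k}\mathcal{K}^{(\beta)}_{k,\lambda}(x)/k!$. The paper substitutes into this expansion directly rather than passing through $\varrho$, but that is the same computation. Every step you write is correct. This includes your handling of $k=0$, which the paper passes over.

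What needs fixing is the hedge at the end, which you should resolve rather than leave open. Neither of your two alternatives is available. $\varrho(k,n)$ is fixed by its definition as the coefficient of $z^{n}/n!$ in $\zeta(z)^{k}$. You have shown it equals $\sum_{j=k}^{n}\binom{j-1}{k-1}q^{j-k}p^{-j}j!\,S(n,j)$ exactly, so nothing absorbs the $1/k!$. What your argument proves is
\[
P^{(\beta)}_{n,\lambda}(x)=\sum_{k=0}^{n}\frac{1}{k!}\Big(\sum_{j=k}^{n}\binom{j-1}{k-1}\frac{q^{j-k}}{p^{j}}\,j!\,S(n,j)\Big)\mathcal{K}^{(\beta)}_{k,\lambda}(x).
\]
This is also exactly what the last display of the paper's proof produces. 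The printed statement has simply lost the $1/k!$, along with the closing bracket of the inner sum.

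Your proposed test at $n=1$ cannot detect this, since $k!=1$ for $k\le 1$. Use $n=2$ and compare leading coefficients. $P^{(\beta)}_{2,\lambda}$ is monic, because $\mathcal{L}_{\mathcal{NB}^{(\lambda)}_{p,r}}(0)=e^{\beta}_{\lambda}(0)=1$. $\mathcal{K}^{(\beta)}_{2,\lambda}$ has leading coefficient $p^{2}$, because $\vartheta(t)=pt+O(t^{2})$. The inner sum at $k=n=2$ is $2/p^{2}$, so the version without $1/k!$ would give leading coefficient $2$, while the corrected one gives $1$. As a sanity check, at $n=1$ the corrected formula gives $P^{(\beta)}_{1,\lambda}=\mathcal{K}^{(\beta)}_{1,\lambda}/p=x-\beta rq/p$. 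This agrees with $x-\mathcal{L}'_{\mathcal{NB}^{(\lambda)}_{p,r}}(0)$.
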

\begin{proof}
Let $u=e^{z}-1$, then we have
$$\begin{array}{lll}
\zeta^{k}(z)&=&\displaystyle{\frac{1}{(1-q)^{k}}u^{k}\Big(1-\frac{q}{1-q}u\Big)^{-k}}\\
&=&\displaystyle{\frac{1}{(1-q)^{k}}(e^{z}-1)^{k}\sum^{\infty}_{m=0}\binom{-k}{m}\Big(\frac{-q}{1-q}u\Big)^{m}}\\
&=&\displaystyle{\frac{1}{(1-q)^{k}}(e^{z}-1)^{k}\sum^{\infty}_{m=0}\binom{k+m-1}{m}\big(\frac{q}{p}\big)^{m}u^{m}}\\
&=&\displaystyle{\frac{1}{(1-q)^{k}}\sum^{\infty}_{m=0}\binom{k+m-1}{m}\big(\frac{q}{p}\big)^{m}(e^{z}-1)^{k+m}}.
\end{array}$$
Thus by using \eqref{Stir-2} we can write
$$\begin{array}{lll}
\zeta^{k}(z)&=&\displaystyle{\frac{1}{(1-q)^{k}}\sum^{\infty}_{m=0}\binom{k+m-1}{m}\big(\frac{q}{p}\big)^{m}
\sum^{\infty}_{n=k+m}(k+m)!S(n,
k+m)\frac{z^{n}}{n!}}\\
&=&\displaystyle{\frac{1}{p^{k}}\sum^{\infty}_{n=k}\frac{z^{n}}{n!}\sum^{n-k}_{m=0}\binom{k+m-1}{m}\big(\frac{q}{p}\big)^{m}
(k+m)!S(n,k+m)}\\
&=&\displaystyle{\frac{1}{p^{k}}\sum^{\infty}_{n=k}\frac{z^{n}}{n!}\sum^{n}_{j=k}\binom{j-1}{k-1}\big(\frac{q}{p}\big)^{j-k}
j!S(n,j)}\\
&=&\displaystyle{\sum^{\infty}_{n=k}\Big(\sum^{n}_{j=k}\binom{j-1}{k-1}\frac{q^{j-k}}{p^{j}}
j!S(n,j)\Big)\frac{z^{n}}{n!}}.
\end{array}$$
By using equation \eqref{wick-exp}, we have
$$\begin{array}{lll}
\displaystyle{\frac{e^{xz}}{\mathcal{L}_{\mathcal{NB}^{(\lambda)}_{p,
r}}(z)}}&=&\displaystyle{\sum^{\infty}_{n=0}\frac{z^{n}}{n!}P^{(\beta)}_{n,
\lambda}(x)}\\
&=&\displaystyle{\sum^{\infty}_{k=0}\frac{(\zeta(z))^{k}}{k!}\mathcal{K}^{(\beta)}_{k, \lambda}(x)}\\
&=&\displaystyle{\sum^{\infty}_{k=0}\Big[\sum^{\infty}_{n=k}\Big(\sum^{n}_{j=k}\binom{j-1}{k-1}\frac{q^{j-k}}{p^{j}}
j!S(n,j)\Big)\frac{z^{n}}{n!}\Big]\frac{\mathcal{K}^{(\beta)}_{k,
\lambda}(x)}{k!}}
\\
&=&\displaystyle{\sum^{\infty}_{n=0}\frac{z^{n}}{n!}\sum^{n}_{k=0}\Big(\sum^{n}_{j=k}\binom{j-1}{k-1}\frac{q^{j-k}}{p^{j}}
j!S(n,j)\Big)\frac{\mathcal{K}^{(\beta)}_{k, \lambda}(x)}{k!}}.
\end{array}$$
Hence we obtain the explicit expression of $P^{(\beta)}_{n,
\lambda}(x)$ in terms of $\mathcal{K}^{(\beta)}_{n, \lambda}(x)$
given as equation \eqref{Poly-Stir-2}.
\end{proof}
Now our goal to express the the degenerate Krawtchouk Appell
polynomials $\mathcal{K}^{(\beta)}_{n, \lambda}, n\in \mathbb{N}$,
in terms of Bell polynomials $B_{n,k}$ which are defined for any
$x_{1}, \ldots , x_{n-k+1}\in \mathbb{R}$ by
$$B_{n,k}((x_{i})^{n-k+1}_{j=1}):=
\displaystyle{\sum\frac{n!}{i_{1}!i_{2}!\ldots
i_{n-k+1}!}\Big(\frac{x_{1}}{1!}\Big)^{i_{1}} \ldots
\Big(\frac{x_{n-k+1}}{(n-k+1)!}\Big)^{i_{n-k+1}}},$$ where the
summation is over all sequences of non-negative integers $i_{1},
i_{2}, \ldots , i_{n-k+1}$ satisfying the following conditions:
\begin{itemize}
\item [(i)] $i_{1}+i_{2}+\ldots+i_{n-k+1}=k$,\\
\item [(ii)] $i_{1}+2i_{2}+\ldots+(n-k+1)i_{n-k+1}=n$.
\end{itemize}
Recall that the Fa\`{a} di Bruno's formula is given as follows:
\begin{equation}\label{Bruno-for}
\displaystyle{\frac{d^{n}}{dt^{n}}(\varphi(\psi(t)))=\sum^{n}_{k=0}\varphi^{(k)}(\psi(t))B_{n,
k}(\psi'(t), \psi''(t), \ldots, \psi^{(n-k+1)}(t))}.
\end{equation}
For more details see \cite{[j02]}.
\begin{lem}\label{lemma-tech1}
For every $n \in \mathbb{N}^{\ast}$, it holds that
$$P^{(\beta)}_{n, \lambda}(0)=\displaystyle{\sum^{n}_{k=0}(-1)^{k}k!B_{n, k}\Big(\big(M_{\lambda}(j)\big)^{n-k+1}_{j=1}\Big)}.$$
\end{lem}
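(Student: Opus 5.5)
We specialise the generating function \eqref{Exp-alpha-1} at $x=0$ and then apply Fa\`{a} di Bruno's formula \eqref{Bruno-for}. At $x=0$, \eqref{Exp-alpha-1} gives
$$\sum^{\infty}_{n=0}\frac{z^{n}}{n!}P^{(\beta)}_{n,\lambda}(0)=\frac{1}{\mathcal{L}_{\mathcal{N}\mathcal{B}^{(\lambda)}_{p,r}}(z)},\qquad z\in\mathcal{U}.$$
Hence $P^{(\beta)}_{n,\lambda}(0)=\frac{d^{n}}{dz^{n}}\big[\varphi(\psi(z))\big]_{z=0}$, where we take $\varphi(u)=1/u$ and $\psi(z)=\mathcal{L}_{\mathcal{N}\mathcal{B}^{(\lambda)}_{p,r}}(z)$.

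The first step is to identify the derivatives of $\psi$ at $0$ with the moments. Differentiating the Laplace transform under the sum gives
$$\psi^{(j)}(0)=\sum_{k\geq 0}k^{j}\,\mathcal{N}\mathcal{B}^{(\lambda)}_{p,r}(\{k\})=M_{\lambda}(j).$$
This is exactly the $j$-th moment computed in the moment proposition, in its series form \eqref{moments-F1}. The interchange of differentiation and summation is justified because the series \eqref{laplace-tr-Degen-Pascal} converges for $|qe^{z}|<1$, a neighbourhood of $0$. In particular $\psi(0)=M_{\lambda}(0)=1$, since $\mathcal{N}\mathcal{B}^{(\lambda)}_{p,r}$ is a probability measure; this can also be read off \eqref{laplace-tr-Degen-Pascal}, because $r\log\big(\frac{p}{1-q}\big)=0$ and $e^{\beta}_{\lambda}(0)=1$.

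The second step is to compute the outer derivatives: $\varphi^{(k)}(u)=(-1)^{k}k!\,u^{-k-1}$, so $\varphi^{(k)}(\psi(0))=(-1)^{k}k!$. Substituting into \eqref{Bruno-for} at $t=0$ gives
$$P^{(\beta)}_{n,\lambda}(0)=\sum^{n}_{k=0}(-1)^{k}k!\,B_{n,k}\big(M_{\lambda}(1),\ldots,M_{\lambda}(n-k+1)\big),$$
which is the claim. The $k=0$ term vanishes for $n\geq1$ since $B_{n,0}=0$, consistent with the statement being made for $n\in\mathbb{N}^{\ast}$.

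The only delicate point is the first step, namely making sure that $\psi$ is analytic at $0$ with $\psi(0)=1\neq0$ and $\psi^{(j)}(0)=M_{\lambda}(j)$. This would be justified either by the explicit closed form \eqref{laplace-tr-Degen-Pascal} (analytic where $1+\lambda r\log\frac{p}{1-qe^{z}}$ stays away from the branch cut near $z=0$) or by dominated convergence on the series with weights \eqref{Degen-Pascal-formula2}. After that the proof is a direct application of \eqref{Bruno-for}.
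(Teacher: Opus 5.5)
Your argument is correct and is essentially the paper's proof. Both apply Fa\`{a} di Bruno's formula \eqref{Bruno-for} with outer function $u\mapsto 1/u$ and inner function $\mathcal{L}_{\mathcal{N}\mathcal{B}^{(\lambda)}_{p,r}}$ at $z=0$; the paper detours through a Leibniz-rule expansion against $e^{xz}$ that collapses once $x=0$, whereas you set $x=0$ from the start and state explicitly the facts $\mathcal{L}_{\mathcal{N}\mathcal{B}^{(\lambda)}_{p,r}}(0)=1$ and $\mathcal{L}^{(j)}_{\mathcal{N}\mathcal{B}^{(\lambda)}_{p,r}}(0)=M_{\lambda}(j)$ that the paper uses silently.

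One adjustment: ground those two facts in the closed form $e^{\beta}_{\lambda}\{r\log(\frac{p}{1-qe^{z}})\}$, which is the moment generating function of the probability mixture \eqref{formula1} and is finite near $z=0$. Do not ground them in the series weights \eqref{Degen-Pascal-formula2} behind \eqref{moments-F1}: those weights sum (where convergent) to $(1+\lambda r\log p+\lambda q)^{\beta/\lambda}$, which is not $1$ in general.
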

\begin{proof}
First, applying \eqref{Exp-alpha-1}, we obtain
$$P^{(\beta)}_{n, \lambda}(0)=\displaystyle{\frac{d^{n}}{dz^{n}}\mathcal{N}\!\exp^{\beta}_{\lambda}(z, x)\big|_{z=x=0}}.$$
Let $\varphi_{1}$ and $\varphi_{2}$ be the two functions defined by
$$\begin{array}{lll}
\varphi_{1}(z)&:=&\mathcal{N}\!\exp^{\beta}_{\lambda}(z,
0)=\displaystyle{\frac{1}{\mathcal{L}_{\mathcal{NB}^{(\lambda)}_{p,
r}}(z)}=
\frac{1}{e^{\beta}_{\lambda}[r\log(\frac{p}{1-qe^{z}})]}}\\
\varphi_{2}(z)&:=&e^{xz},
\end{array}$$
then by the general Leibniz rule we get
$$\begin{array}{lll}
P^{(\beta)}_{n, \lambda}(0)&=&\displaystyle{\frac{d^{n}}{dz^{n}}\big(\varphi_{1}(z)\varphi_{2}(z)\big)|_{z=x=0}}\\
&=&\displaystyle{\sum^{n}_{k=0}
\binom{n}{k}\varphi^{(n-k)}_{1}(z)\varphi^{(k)}_{2}(z)|_{z=x=0}}.
\end{array}$$
It is clear that
$$\varphi^{(k)}_{2}(z)=x^{k}\varphi_{2}(z).$$
On the other hand, by using formula \eqref{Bruno-for} with the
choice
$$\varphi_{1}(z)=\sigma(\mathcal{L}_{\mathcal{NB}^{(\lambda)}_{p, r}}(z)),\quad \psi(z)=\frac{1}{z},$$
we get\\

$\varphi^{(n-k)}_{1}(z)$
$$\begin{array}{lll}
&=&\displaystyle{\sum^{n-k}_{j=0}\psi^{(j)}(\mathcal{L}_{\mathcal{NB}^{(\lambda)}_{p,
r}}(z))B_{n-k, j}(\mathcal{L}'_{\mathcal{NB}^{(\alpha)}_{p, r}}(z),
\mathcal{L}''_{\mathcal{NB}^{(\alpha)}_{p, r}}(z), \ldots,
\mathcal{L}^{(n-k-j+1)}_{\mathcal{NB}^{(\alpha)}_{p, r}}(z))}\\
&=&\displaystyle{\sum^{n-k}_{j=0}\frac{(-1)^{j}j!}{(\mathcal{L}_{\mathcal{NB}^{(\lambda)}_{p,
r}}(z))^{j+1}}B_{n-k,
j}\big((\mathcal{L}^{(i)}_{\mathcal{NB}^{(\lambda)}_{p,
r}}(z))^{n-k-j+1}_{i=1}\big)}.
\end{array}$$
Putting together yields\\

$\displaystyle{\frac{d^{n}}{dz^{n}}\big(\varphi_{1}(z)\varphi_{2}(z)\big)}$
$$\displaystyle{=
\sum^{n}_{k=0}\binom{n}{k}\Big[\sum^{n-k}_{j=0}\frac{(-1)^{j}j!}{(\mathcal{L}_{\mathcal{NB}^{(\lambda)}_{p,
r}}(z))^{j+1}} B_{n-k,
j}\big((\mathcal{L}^{(i)}_{\mathcal{NB}^{(\lambda)}_{p,
r}}(z))^{n-k-j+1}_{i=1}\big) \Big]x^{k}\varphi_{2}(z)}
$$
and the statement follows by evaluating the previous expression at
$z=x=0$.
\end{proof}
Combining \eqref{eq-power-xn} and Lemma \ref{lemma-tech1} with the
uniqueness of the representation in terms of the standard powers
$x^{k}$, we obtain the following technical result.
\begin{lem}
The Appell polynomials $P^{(\beta)}_{n, \lambda}$ has the following
form:
$$
P^{(\beta)}_{n, \lambda}(x)=\displaystyle{
\sum^{n}_{k=0}\binom{n}{k}\Big[\sum^{n-k}_{i=0}(-1)^{i}i! B_{n-k,
i}\big((M_{\lambda}(j))^{n-k-j+1}_{j=1}\big)\Big]x^{k}}.
$$
\end{lem}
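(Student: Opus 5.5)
The plan is to read off the coefficients of $P^{(\beta)}_{n,\lambda}$ directly from the generating function \eqref{Exp-alpha-1}, which factors as a product. We have
$$\mathcal{N}\!\exp^{\beta}_{\lambda}(z,x)=\varphi_{1}(z)\,e^{xz},\qquad \varphi_{1}(z)=\frac{1}{\mathcal{L}_{\mathcal{NB}^{(\lambda)}_{p,r}}(z)}.$$
Since $\varphi_{1}$ is independent of $x$, setting $x=0$ in \eqref{Exp-alpha-1} gives $\varphi_{1}(z)=\sum_{m\ge0}\frac{z^{m}}{m!}P^{(\beta)}_{m,\lambda}(0)$. Taking the Cauchy product with $e^{xz}=\sum_{k}\frac{(xz)^{k}}{k!}$ and comparing coefficients of $z^{n}/n!$ yields the Appell binomial identity
$$P^{(\beta)}_{n,\lambda}(x)=\sum_{k=0}^{n}\binom{n}{k}P^{(\beta)}_{n-k,\lambda}(0)\,x^{k}.$$
Equivalently, this is the Leibniz-rule computation already carried out in the proof of Lemma \ref{lemma-tech1}, evaluated at $z=0$ but with $x$ kept free.

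It then remains to insert Lemma \ref{lemma-tech1} with $n$ replaced by $n-k$:
$$P^{(\beta)}_{n-k,\lambda}(0)=\sum_{i=0}^{n-k}(-1)^{i}i!\,B_{n-k,i}\big((M_{\lambda}(j))_{j}\big).$$
This uses the fact that $\mathcal{L}_{\mathcal{NB}^{(\lambda)}_{p,r}}^{(j)}(0)=M_{\lambda}(j)$ and $\mathcal{L}_{\mathcal{NB}^{(\lambda)}_{p,r}}(0)=1$, which is exactly why the factor $1/\mathcal{L}^{j+1}$ from Fa\`a di Bruno disappears there. Substituting gives the stated double sum.

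The boundary term needs a separate check. Lemma \ref{lemma-tech1} is stated for $n\ge1$, but the case $k=n$ needs $P^{(\beta)}_{0,\lambda}(0)=1$. This agrees with the formula through the convention $B_{0,0}=1$, since $\varphi_{1}(0)=1$.

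As a consistency check, one can compare with \eqref{eq-power-xn}, $x^{n}=\sum_{k}\binom{n}{k}P^{(\beta)}_{k,\lambda}(x)M_{\lambda}(n-k)$. This relation determines the $P^{(\beta)}_{k,\lambda}$ uniquely by triangularity in the basis $\{x^{k}\}$, as the paper suggests. The only real obstacle is bookkeeping of indices: the Bell polynomial's argument list should run over $j=1,\dots,n-k-i+1$, so the displayed upper index $n-k-j+1$ should be read as $n-k-i+1$. Beyond that, the proof is a direct coefficient comparison.
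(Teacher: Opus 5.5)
Your argument is correct, and it is more direct than the route the paper indicates. The paper's one-line justification combines three ingredients: the inversion relation \eqref{eq-power-xn}, $x^{n}=\sum_{k}\binom{n}{k}P^{(\beta)}_{k,\lambda}(x)M_{\lambda}(n-k)$; Lemma~\ref{lemma-tech1}; and uniqueness of coefficients in the basis $\{x^{k}\}$. That relation is triangular, because $M_{\lambda}(0)=1$, so it determines each $P^{(\beta)}_{n,\lambda}$. The formula is then identified by verifying that it satisfies the relation. Making that verification explicit reduces to the convolution identity $\sum_{j=0}^{N}\binom{N}{j}P^{(\beta)}_{j,\lambda}(0)M_{\lambda}(N-j)=\delta_{N,0}$, i.e.\ $\varphi_{1}\,\mathcal{L}_{\mathcal{NB}^{(\lambda)}_{p,r}}=1$.

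You skip the uniqueness step. You expand $\varphi_{1}(z)e^{xz}$ to get the Appell identity $P^{(\beta)}_{n,\lambda}(x)=\sum_{k}\binom{n}{k}P^{(\beta)}_{n-k,\lambda}(0)x^{k}$ and then substitute Lemma~\ref{lemma-tech1}. As you note, this is just the Leibniz display in the proof of Lemma~\ref{lemma-tech1} with only $z$ set to $0$. The paper's route ties the lemma to the moment-inversion formula behind (P2); yours is shorter and self-contained. Your treatment of the $k=n$ term via $B_{0,0}=1$ and your correction of the upper index to $n-k-i+1$ are both right.

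One caveat applies to both arguments. $M_{\lambda}(j)$ enters only through $M_{\lambda}(j)=\mathcal{L}^{(j)}_{\mathcal{NB}^{(\lambda)}_{p,r}}(0)$, with $\mathcal{L}_{\mathcal{NB}^{(\lambda)}_{p,r}}(z)=e^{\beta}_{\lambda}\big[r\log\big(\frac{p}{1-qe^{z}}\big)\big]$. So the lemma should be read with $M_{\lambda}(j)$ meaning these derivatives, i.e.\ the moments of the mixture \eqref{formula1}. It should not be read with the closed form \eqref{m-th-Moments}. That closed form was computed from the point masses \eqref{Degen-Pascal-formula2}, whose moment generating function is $e^{\beta}_{\lambda}(r\log p+qe^{z})$, not $\mathcal{L}_{\mathcal{NB}^{(\lambda)}_{p,r}}$.
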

Finally, we obtain the following main result, which gives an
explicit expression of the Krawtchouk Appell in terms of Bell
polynomials.
\begin{thm}
The degenerate Krawtchouk Appell polynomials
$\mathcal{K}^{(\lambda)}_{n, \beta}$ are given in terms of the Bell
polynomials as
\begin{equation}\label{Generalized-Appel-FP}
\mathcal{K}^{(\beta)}_{n, \lambda}(x)=\displaystyle{\sum^{n}_{k=0}
\binom{n}{k}\Big[\sum^{n-k}_{i=0}(-1)^{i}i!B_{n-k,
i}\Big((M_{\alpha}(j))^{n-k-i+1}_{j=1}\Big)\Big][x]_{k}}.
\end{equation}
\end{thm}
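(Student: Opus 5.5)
The plan is to combine the translation formula (P4) with a Fa\`{a} di Bruno computation of the constant terms $\mathcal{K}^{(\beta)}_{m,\lambda}(0)$, in the same way as in Lemma \ref{lemma-tech1}.

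First, I apply (P4) with $x=0$ and with $y$ renamed $x$. This gives
$$\mathcal{K}^{(\beta)}_{n,\lambda}(x)=\sum_{k=0}^{n}\binom{n}{k}\mathcal{K}^{(\beta)}_{n-k,\lambda}(0)\,[x]_{k},$$
after reindexing $k\mapsto n-k$ and using $\binom{n}{k}=\binom{n}{n-k}$. Equivalently, in generating-function form,
$$\Psi_\lambda(t,x)=\exp(x\vartheta(t))\,\Omega(t),$$
where $\exp(x\vartheta(t))=\sum_k [x]_k t^k/k!$ by \eqref{Taylor-exp-exp-v} and $\Omega$ is as in \eqref{def-phi-t}. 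The theorem then reduces to the single identity
$$\mathcal{K}^{(\beta)}_{m,\lambda}(0)=\sum_{i=0}^{m}(-1)^i i!\,B_{m,i}\big((M(j))_{j=1}^{m-i+1}\big).$$

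Second, I compute $\mathcal{K}^{(\beta)}_{m,\lambda}(0)=\Omega^{(m)}(0)$ exactly as in the proof of Lemma \ref{lemma-tech1}. Write $\Omega=\psi\circ L$ with $\psi(u)=1/u$ and
$$L(t):=\mathcal{L}_{\mathcal{NB}^{(\lambda)}_{p,r}}(\vartheta(t))=(e^{\beta}_{\lambda}\circ\xi_q)(t).$$
Since $\psi^{(i)}(u)=(-1)^i i!\,u^{-i-1}$, formula \eqref{Bruno-for} gives
$$\Omega^{(m)}(0)=\sum_{i=0}^{m}\frac{(-1)^i i!}{L(0)^{i+1}}\,B_{m,i}\big(L'(0),\dots,L^{(m-i+1)}(0)\big).$$
Because $\vartheta(0)=0$ and the Laplace transform equals $1$ at the origin, we have $L(0)=1$. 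This already yields the required shape, with the Bell-polynomial arguments being $L^{(j)}(0)=(e^{\beta}_{\lambda}\circ\xi_q)^{(j)}(0)$.

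The main obstacle is the last step: identifying these arguments with the moments written in \eqref{Generalized-Appel-FP}. The quantities $M_\lambda(j)$ from \eqref{m-th-Moments} are the derivatives at $0$ of $z\mapsto\mathcal{L}_{\mathcal{NB}^{(\lambda)}_{p,r}}(z)$, not of its composition with $\vartheta$. The chain rule mixes them through the $\eta_k$, i.e. $L^{(j)}(0)=\sum_{l}M_\lambda(l)B_{j,l}(\eta_1,\eta_2,\dots)$. So I would read $M_\alpha(j)$ in the statement as the moments of the measure transported by $\vartheta$, namely $M_\alpha(j):=(e^{\beta}_{\lambda}\circ\xi_q)^{(j)}(0)$. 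This is the same sequence that appears in (P3) and in the recurrence for $c_n(r)$.

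If one instead insists on the literal $M_\lambda(j)$, the first thing I would try is to substitute the above chain-rule expression into the Bell polynomials and use the composition law for Bell polynomials. That would express everything through $M_\lambda$ and the $\eta_k$, i.e. through the $\varpi(m,n)$ of (P1). This is consistent with (P1) combined with the preceding Lemma, but it does not collapse to the displayed closed form in general. So I expect the honest proof to go through the first interpretation, verified by the generating-function identity above.
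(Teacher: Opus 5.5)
Your route is the paper's route. (P4) at $x=0$ reduces everything to the constants $\mathcal{K}^{(\beta)}_{m,\lambda}(0)$, and Fa\`{a} di Bruno with outer function $u\mapsto 1/u$ then computes them. Your reindexing also repairs the paper's last display, where $\mathcal{K}^{(\lambda)}_{n,\beta}(0)$ should be the $(n-k)$-th constant.

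The obstacle you flag is genuine, and the paper does not get past it legitimately. It takes $\varphi(z)=1/\mathcal{L}_{\mathcal{N}\mathcal{B}^{(\lambda)}_{p,r}}(z)$ instead of $1/\mathcal{L}_{\mathcal{N}\mathcal{B}^{(\lambda)}_{p,r}}(\vartheta(z))$, so $\varphi\psi$ is not $\Psi_\lambda$. As a result, the right-hand side of \eqref{Appel-at0} is, by Lemma~\ref{lemma-tech1}, exactly $P^{(\beta)}_{n,\lambda}(0)$, not $\mathcal{K}^{(\beta)}_{n,\lambda}(0)$.

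So you should not hedge. Reading $M_\alpha$ as the paper's moments $M_\lambda$ of $\mathcal{N}\mathcal{B}^{(\lambda)}_{p,r}$, the statement is false already at $n=1$:
\begin{itemize}
\item The generating function gives $\mathcal{K}^{(\beta)}_{1,\lambda}(x)=[x]_1-(e^{\beta}_{\lambda}\circ\xi_q)'(0)=px-\beta rq$. This is consistent with $\mathcal{K}^{(\beta)}_{1,\lambda}(0)=rc_1(r)=-\beta rq$ via \eqref{Gen-fun-at0}.
\item The displayed formula gives $[x]_1-M_\lambda(1)=px-\beta rq/p$, since $M_\lambda(1)=\mathcal{L}'_{\mathcal{N}\mathcal{B}^{(\lambda)}_{p,r}}(0)=\beta rq/p$.
\item The discrepancy is exactly the chain-rule factor $\vartheta'(0)=p$, and it is nonzero whenever $q\neq 0$.
\end{itemize}
The paper's Example, which lists $\mathcal{K}^{(\beta)}_{1,\lambda}(x)=x-\beta rq/p$, is actually $P^{(\beta)}_{1,\lambda}(x)$, so it does not serve as a check.

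Record this counterexample explicitly. Then present your argument as a complete proof of the corrected theorem, whose Bell-polynomial arguments are $(e^{\beta}_{\lambda}\circ\xi_q)^{(j)}(0)=\sum_{l}M_\lambda(l)B_{j,l}(\eta_1,\ldots,\eta_{j-l+1})$.

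One wording fix: do not describe these numbers as moments of a transported measure. They are just the Taylor coefficients of $\mathcal{L}_{\mathcal{N}\mathcal{B}^{(\lambda)}_{p,r}}\circ\vartheta$. For $-\lambda r<1$ the would-be variance $\beta rq^{2}(-\lambda r-1)$ is negative, so they are not the moments of any probability measure.
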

\begin{proof}
Firstly by using the general Leibniz rule we have
$$\begin{array}{lll}
\mathcal{K}^{(\lambda)}_{n, \beta}(0)&=&\displaystyle{\frac{d^{n}}{dz^{n}}\big(\varphi(z)\psi(z)\big)|_{z=x=0}}\\
&=&\displaystyle{\sum^{n}_{k=0}\binom{n}{k}\varphi^{(n-k)}(z)\psi^{(k)}(z)|_{z=x=0}},
\end{array}$$
with the choice
$$\begin{array}{lll}
\varphi(z)&:=&\mathcal{N}\!\exp^{\beta}_{\lambda}(z,
0)=\displaystyle{\frac{1}{\mathcal{L}_{\mathcal{NB}^{(\lambda)}_{p,
r}}(z)}=
\frac{1}{e^{\beta}_{\lambda}[r\log(\frac{p}{1-qe^{z}})]}}\\
\psi(z)&:=&\displaystyle{e^{x\vartheta(z)}=e^{x\log(\frac{1+z}{1+qz})}},
\quad |qe^{z}|<1.
\end{array}$$
Hence again by formula \eqref{Bruno-for} we obtain
$$
\varphi^{(n-k)}(z)=\displaystyle{\sum^{n-k}_{j=0}\frac{(-1)^{j}j!}{(\mathcal{L}_{\mathcal{NB}^{(\lambda)}_{p,
r}}(\vartheta(z)))^{j+1}} B_{n-k,
j}\big((\mathcal{L}^{i}_{\mathcal{NB}^{(\lambda)}_{p,
r}}(\vartheta(z)))^{n-k-j+1}_{i=1}\big)}
$$
and
$$
\psi^{(k)}(z)=\displaystyle{\sum^{k}_{l=0}x^{l}e^{x\vartheta(z)}
B_{k, l}\big((\vartheta^{(m)}(z))^{k-l+1}_{m=1}\big)}.
$$
Then we have
\begin{equation}\label{Appel-at0}
\mathcal{K}^{(\lambda)}_{n,
\beta}(0)=\displaystyle{\sum^{n}_{j=0}(-1)^{j}j!B_{n,
j}\Big((M_{\lambda}(m))^{n-j+1}_{m=1}\Big)}.
\end{equation}
Hence by using the property (P4) and (\ref{Appel-at0}) we get
$$\begin{array}{lll}
\mathcal{K}^{(\lambda)}_{n,
\beta}(x)&=&\displaystyle{\sum^{n}_{k=0}\binom{n}{k}
\mathcal{K}^{(\lambda)}_{n, \beta}(0)[x]_{k}}\\
&=&\displaystyle{\sum^{n}_{k=0}\binom{n}{k}
\Big[\sum^{n-k}_{i=0}(-1)^{i}i!B_{n-k,
i}\Big((M_{\lambda}(j))^{n-k-i+1}_{j=1}\Big)\Big][x]_{k}}.
\end{array}$$
\end{proof}

Now let $\mathcal{P}(\mathbb{R})$ the set of all polynomials in
$\mathbb{R}$ with complex coefficients defined by
$$\mathcal{P}(\mathbb{R})=\Big\{\varphi: \mathbb{R}\rightarrow \mathbb{C}; \varphi(x)=
\displaystyle{\sum^{N_{\varphi}}_{n=0}\varphi_{n}\mathcal{K}^{(\lambda)}_{n,
\beta}(x)}, \varphi_{n}\in \mathbb{C}, N_{\varphi}\in
\mathbb{N}^{\ast}\Big\}.$$ Let $z\in \mathbb{C}$ be given. Define
the scaling operator $\sigma_{z}$ as a continuous from
$\mathcal{P}(\mathbb{R})$ into itself such that
$$(\sigma_{z}\varphi)(x)=\varphi(zx), \quad \varphi\in \mathcal{P}(\mathbb{R}).$$
\begin{thm}
Let $\varphi\in \mathcal{P}(\mathbb{R})$ be given by its chaos
expansion
$$\varphi(x)=
\displaystyle{\sum^{N_{\varphi}}_{n=0}\varphi_{n}\mathcal{K}^{(\lambda)}_{n,
\beta}(x)}.$$ Then $\sigma_{z}\varphi$ is given explicitly as
follows:
\begin{equation}\label{scaling-expansion}
\sigma_{z}(\varphi)=\displaystyle{\sum^{N_{\varphi}}_{n=0}n!\varphi_{n}\sum^{n}_{k=0}\varepsilon_{n-k}(zx)\sum^{k}_{m=0}\frac{\rho(m,
k)(-\beta)_{m, \lambda}}{m!k!}},
\end{equation}
where $\varepsilon_{j}$ is given as equation \eqref{def-omega} and
$$\rho(m, k)=\displaystyle{\sum_{l_{1}+\ldots l_{m}=n}(-1)^{n+m}q^{m}k!\prod^{m}_{i=1}\frac{1}{l_{i}}}.$$
\end{thm}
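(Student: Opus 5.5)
By linearity of $\sigma_z$ and the finiteness of the chaos expansion, it suffices to prove the statement for a single polynomial, namely to show that
\[
\mathcal{K}^{(\beta)}_{n,\lambda}(zx)=n!\sum_{k=0}^{n}\varepsilon_{n-k}(zx)\sum_{m=0}^{k}\frac{\rho(m,k)(-\beta)_{m,\lambda}}{m!\,k!}
\]
for every $n$. Summing against $\varphi_n$ then gives \eqref{scaling-expansion}. The plan is to read this off the generating function \eqref{Generatin-form3} with $x$ replaced by $zx$. Write $t$ for the generating variable, to avoid a clash with the scaling parameter $z$. Then
\[
\sum_{n\ge0}\frac{t^n}{n!}\mathcal{K}^{(\beta)}_{n,\lambda}(zx)=\omega(t)^{zx}\,\Omega(t),\qquad \Omega(t)=\big(e^{\beta}_{\lambda}(r\log(1+qt))\big)^{-1}.
\]
The first factor has already been expanded in the proof of the theorem giving \eqref{Fractional-Krawtchouk-poly}: $\omega(t)^{y}=\sum_j\varepsilon_j(y)t^j$, and we simply substitute $y=zx$. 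This produces the factors $\varepsilon_{n-k}(zx)$.

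The second step is to expand $\Omega(t)$ in a form that yields $(-\beta)_{m,\lambda}$. From \eqref{Degenerate-exp}, $1/e^{\beta}_{\lambda}(u)=(1+\lambda u)^{-\beta/\lambda}=e^{-\beta}_{\lambda}(u)=\sum_m(-\beta)_{m,\lambda}u^m/m!$. Take $u=\xi_q(t)=r\log(1+qt)$. For the powers $u^m$, expand $\log(1+qt)=\sum_{l\ge1}(-1)^{l+1}q^l t^l/l$ and form the $m$-fold Cauchy product. This gives
\[
\log(1+qt)^m=\sum_k t^k\sum_{l_1+\cdots+l_m=k}(-1)^{k+m}q^{k}\prod_i \frac{1}{l_i},
\]
which matches $\rho(m,k)/k!$ up to the paper's normalisations of the power of $q$ and the factor $r^m$. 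Inserting this and exchanging the order of summation (legitimate for $|t|$ small, where everything converges absolutely) gives $\Omega(t)=\sum_k \frac{t^k}{k!}\sum_{m\le k}\frac{\rho(m,k)(-\beta)_{m,\lambda}}{m!}$. Here I would treat the $r^m$ and $q$-power bookkeeping as absorbed into $\rho$, as the statement implicitly does. The indices in $\rho$ must also be read with the subscript $k$ rather than $n$.

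The final step is the Cauchy product of the two series, $\sum_j\varepsilon_j(zx)t^j\cdot\sum_k c'_k t^k/k!$. Comparing the coefficients of $t^n$ with those of $\sum_n t^n\mathcal{K}^{(\beta)}_{n,\lambda}(zx)/n!$ gives the displayed identity, which is the same as the factor $n!\sum_k\varepsilon_{n-k}(zx)(\cdots)/k!$ in \eqref{scaling-expansion}.

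The main obstacle is not analytic but combinatorial bookkeeping. The exact constants in $\rho(m,k)$ (the power $q^m$ versus $q^k$, the sign $(-1)^{k+m}$, the missing $r^m$) have to be reconciled with the stated definition. I would first verify the cases $n=1$ and $n=2$ against the explicit $c_1(r), c_2(r)$ from the Example, and then fix the convention so that it matches.
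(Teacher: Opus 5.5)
Your route is the paper's route. You reduce by linearity to $\mathcal{K}^{(\beta)}_{n,\lambda}(zx)$ and use $p/(1-qe^{\vartheta(t)})=1+qt$ to write the generating function as $\omega(t)^{zx}e^{-\beta}_{\lambda}(\xi_q(t))$. You then expand the second factor via $\sum_m(-\beta)_{m,\lambda}\xi_q(t)^m/m!$ and the $m$-fold Cauchy power of the logarithm, and compare coefficients of $t^n$. Your expansion of $\log(1+qt)^m$ is correct. The mismatch you noticed, however, is not a normalisation that can be ``absorbed as the statement implicitly does''. It is an error in the statement, and the paper's proof makes it silently when expanding $\xi_q(t)^m=r^m\log(1+qt)^m$: it drops $r^m$ and writes $q^m$ for $q^{l_1+\cdots+l_m}=q^k$. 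Concretely, for $n=1$ the printed $\rho(1,1)=q$ gives $\mathcal{K}^{(\beta)}_{1,\lambda}(zx)=pzx-\beta q$, whereas $\Omega'(0)=-\beta rq$ gives $pzx-\beta rq$. For $n=2$, even with $r=1$, the printed $\rho(1,2)=-q$ should be $-q^2$, which shifts the constant term by $\beta pq$.

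So replace that hedge by an explicit correction. Your argument proves \eqref{scaling-expansion} with
\[
\rho(m,k)=r^{m}q^{k}\,k!\sum_{l_1+\cdots+l_m=k,\; l_i\ge 1}(-1)^{k+m}\prod_{i=1}^{m}\frac{1}{l_i}=m!\,r^{m}q^{k}s(k,m),
\]
the last form following from \eqref{Stir-1}. Equivalently, the inner sum over $m$ equals $r^{k}c_k(r)$ from \eqref{Gen-fun-at0}, so the result is just \eqref{Fractional-Krawtchouk-poly} evaluated at $zx$ with a closed form for the coefficients.

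In the same way, $\varepsilon_j(y)$ must be read as the Taylor coefficients of $\omega(t)^y$, i.e.\ with $\binom{y+j-1}{j}$ in place of the printed $\binom{y+j-1}{k-j}$ in \eqref{def-omega}. As printed, $\varepsilon_1(y)=y(y-1)-q$ instead of $py$.

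Finally, drop the plan to fix conventions by matching the Example. Its $c_2(r)$ is itself miscomputed: from $\Omega''(0)=\beta(\beta+\lambda)r^2q^2+\beta rq^2$ one gets $c_2(r)=\beta(\beta+\lambda)q^2+\beta q^2/r$. Matching the printed value would reintroduce an error. Check $n=1,2$ directly against the derivatives of $\Omega(t)=(1+\lambda r\log(1+qt))^{-\beta/\lambda}$ instead.
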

\begin{proof}
Firstly one can see that
$$\begin{array}{lll}
e^{-\beta}_{\lambda}(\xi_{q}(t))&=&\displaystyle{\sum^{\infty}_{n=0}(-\beta)_{n,
\lambda}\frac{\xi^{n}_{q}(t)}{n!}}\\
&=&\displaystyle{1+\sum^{\infty}_{n=1}\frac{1}{n!}(-\beta)_{n,
\lambda}\Big(\sum^{\infty}_{l=0}(-1)^{l-1}rq^{l}\frac{t^{l}}{l}\Big)^{n}}\\
&=&\displaystyle{1+\sum^{\infty}_{n=1}\frac{1}{n!}(-\beta)_{n,
\lambda}\sum^{\infty}_{n=m=0}\frac{t^{n}}{n!}\sum_{l_{1}+\ldots+l_{m}=n}(-1)^{n+m}q^{m}\prod^{m}_{i=1}\frac{1}{l_{i}}}\\
&=&\displaystyle{\sum^{\infty}_{n=0}\frac{t^{n}}{n!}\sum^{n}_{m=0}\frac{\rho(m,
n)(-\beta)_{m, \lambda}}{m!}}.
\end{array}
$$
Hence we obtain
$$\begin{array}{lll}
\Psi_{\lambda}(zx,
t)&=&\displaystyle{\sum^{\infty}_{n=0}\frac{t^{n}}{n!}\mathcal{K}^{(\lambda)}_{n,
\beta}(zx)}\\
&=&\displaystyle{\frac{e^{xz\vartheta(t)}}{e^{\beta}_{\lambda}\{r\log(\frac{p}{1-qe^{\vartheta(t)}})\}}}\\
&=&e^{xz\vartheta(t)}e^{-\beta}_{\lambda}(r\log(1+qt))\\
&=&\displaystyle{\sum^{\infty}_{n=0}\varepsilon_{n}(zx)t^{n}\sum^{\infty}_{n=0}\sum^{n}_{m=0}\frac{\rho(m,
n)(-\beta)_{m, \lambda}}{m!n!}t^{n}}\\
&=&\displaystyle{\sum^{\infty}_{n=0}\sum^{n}_{k=0}\varepsilon_{n-k}(zx)\sum^{k}_{m=0}\frac{\rho(m,
k)(-\beta)_{m, \lambda}}{m!k!}t^{n}}.
\end{array}$$
Thus we get
$$\mathcal{K}^{(\lambda)}_{n,
\beta}(zx)=\displaystyle{n!\sum^{n}_{k=0}\varepsilon_{n-k}(zx)\sum^{k}_{m=0}\frac{\rho(m,
k)(-\beta)_{m, \lambda}}{m!k!}}$$ and as a consequence we obtain
$$\begin{array}{lll}
\sigma_{z}(\varphi)&=&\displaystyle{\sum^{N_{\varphi}}_{n=0}n!\varphi_{n}\mathcal{K}^{(\lambda)}_{n,
\beta}(zx)}\\
&=&\displaystyle{\sum^{N_{\varphi}}_{n=0}n!\varphi_{n}\sum^{n}_{k=0}\varepsilon_{n-k}(zx)\sum^{k}_{m=0}\frac{\rho(m,
k)(-\beta)_{m, \lambda}}{m!k!}}.
\end{array}$$
\end{proof}
\begin{ex}
If $\varphi(x)=
\displaystyle{\sum^{N_{\varphi}}_{n=0}\varphi_{n}\mathcal{K}^{(\lambda)}_{n,
\beta}(x)}$, then by using $\textbf{(P4)}$, the translation operator
$\tau_{y}$ for $y\in \mathbb{R}$ can be represented as
$$\begin{array}{lll}
\tau_{y}\varphi(x)&=&\varphi(x+y)\\
&=&\displaystyle{\sum^{N_{\varphi}}_{n=0}\varphi_{n}\mathcal{K}^{(\lambda)}_{n,
\beta}(x+y)}\\
&=&\displaystyle{\sum^{N_{\varphi}}_{n=0}\sum^{n}_{k=0}\binom{n}{k}\varphi_{n}\mathcal{K}^{(\lambda)}_{n,
\beta}(x)[y]_{n-k}}.
\end{array}$$
Furthermore, the action of the translation operator on the
normalized-Laplace exponential function is given by
$$\tau_{y}\mathcal{N}\exp^{\beta}_{\lambda}(z, x)=e^{yz}\mathcal{N}\exp^{\beta}_{\lambda}(z, x).$$
Let us define an operator $D_{y}$ on $\mathcal{P}(\mathbb{R})$ such
that it action on $\mathcal{N}\exp^{\beta}_{\lambda}(z, x)$ is given
by
$$D_{y}\mathcal{N}\exp^{\beta}_{\lambda}(z, x)=yz\mathcal{N}\exp^{\beta}_{\lambda}(z, x).$$
Then we obtain
$$\begin{array}{lll}
\big\langle\tau_{y}\mathcal{N}\exp^{\beta}_{\lambda}(z, x),
\mathcal{N}\exp^{\beta}_{\lambda}(z, x')\big\rangle&=&\big\langle
e^{yz}\mathcal{N}\exp^{\beta}_{\lambda}(z, x),
\mathcal{N}\exp^{\beta}_{\lambda}(z, x')\big\rangle\\
&=&e^{yz}\big\langle\mathcal{N}\exp^{\beta}_{\lambda}(z, x),
\mathcal{N}\exp^{\beta}_{\lambda}(z, x')\big\rangle.
\end{array}$$
On the other hand, we have\\

$\big\langle\exp(D_{y})\mathcal{N}\exp^{\beta}_{\lambda}(z, x),
\mathcal{N}\exp^{\beta}_{\lambda}(z, x')\big\rangle$
$$\begin{array}{lll}
&=&\displaystyle{\Big\langle
\sum^{\infty}_{k=0}\frac{1}{k!}D^{k}_{y}\mathcal{N}\exp^{\beta}_{\lambda}(z,
x),
\mathcal{N}\exp^{\beta}_{\lambda}(z, x')\Big\rangle}\\
&=&\displaystyle{\Big\langle
\sum^{\infty}_{k=0}\frac{1}{k!}(yz)^{k}\mathcal{N}\exp^{\beta}_{\lambda}(z,
x),
\mathcal{N}\exp^{\beta}_{\lambda}(z, x')\Big\rangle}\\
&=&e^{yz}\langle\mathcal{N}\exp^{\beta}_{\lambda}(z, x),
\mathcal{N}\exp^{\beta}_{\lambda}(z, x')\rangle.
\end{array}$$
Hence we conclude that $\tau_{y}=\exp(D_{y})$.
\end{ex}

%=====================================================================================================================================
\section*{Conclusion}
In this study, we propose a concrete method for investigating
several combinatorial properties and highlight specific aspects of a
new class of polynomial sequences known as degenerate
Krawtchouk-Appell polynomials. Our research focuses primarily on the
role of the relationship between these new polynomials, Stirling
numbers, the scaling operator, translation operator and orthogonal
Bell polynomials. This combinatorial study will serve us in our
future research to develop some stochastic applications, with the
aim of obtaining practical applications.

\section*{Acknowledgments}
The authors extend their appreciation to Ongoing Research Funding Program, (ORF-2026-1068), King Saud University, Riyadh, Saudi Arabia.
\section*{Author Contributions}
All authors reviewed the manuscript.
\section*{Data Availability}
No data sets were generated or analyzed during the current study.
\section*{Declarations}
Competing interests: The authors declare no competing interests.
%============================================================================================================================================

% -----------------------------------------------------------------------
\end{document}